\numberwithin{equation}{section}
\newcommand{\ad}[1]{\textup{ad}\,{#1}}
\newcommand{\ol}{\overline}
\newcommand{\Ol}[2]{\overline{{#2}}^{\mkern2mu{#1}}}
\newcommand{\rest}[2]{{{#1}_{\kern-.5pt|{#2}}}}
\newcommand{\RR}{\mathbb{R}}
\newcommand{\NN}{\mathbb{N}}
\newcommand{\CC}{\mathbb{C}}
\newcommand{\mloc}{M_{\textup{\rm loc}}(A)}
\newcommand{\Mloc}[1]{M_{\textup{\rm loc}}({#1})}
\newcommand{\Mlocbig}[1]{M_{\textup{\rm loc}}\bigl({#1}\bigr)}
\newcommand{\Mlocit}[2]{{M_{\textup{\rm loc}}^{{(#2)}}({#1})}}
\newcommand{\eps}{\varepsilon}
\newcommand{\pfi}{\varphi}
\newcommand{\prim}[1]{\textup{\rm Prim}({#1})}
\newcommand{\prima}{{\textup{\rm Prim}(A)}}
\newcommand{\sepa}{{\textup{\rm Sep}(A)}}
\newcommand{\maxza}{{\textup{\rm Max}(Z(A))}}
\newcommand{\Ice}[1]{{{\mathscr I}_{\mkern-3mu\textup{\rm ce}}({#1})}}
\newcommand{\longrightarrowraised}{{\hbox{\raise.5\jot\hbox{\scriptsize$\longrightarrow$}}}}
\newcommand{\longleftarrowraised}{{\hbox{\raise.5\jot\hbox{\scriptsize$\longleftarrow$}}}}
\newcommand{\dirlim}{{\smash{\underset{\longrightarrowraised}
                   {\operatorname{lim}}}\vphantom{a^{}_{a_f^{}}}}}
\newcommand{\Dirlim}[1]{{\smash{\dirlim{}}\sp{}_{\,{#1}}
                   \vphantom{a^{}_{a_f^{}}}}}
\newcommand\alglim{{\smash{\underset{\longrightarrowraised}
                   {\operatorname{alg\,lim}}}\vphantom{a^{}_{a_f^{}}}}}
\newcommand\Alglim[1]{{\smash{\alglim{}}\sp{}_{\,{#1}}
                   \vphantom{a^{}_{a_f^{}}}}}
\newcommand{\clos}[1]{{\kern.07em{}^{\textup{c}}\kern-.11em{#1}}}          
\newcommand{\closa}{{\clos{\kern-.05em{A}}}}                               
\newcommand{\supp}[1]{{\textup{c}({#1})}}         
\newcommand{\calD}{{\mathcal D}}
\newcommand{\calT}{{\mathcal T}}
\newcommand{\sh}[1]{{\mathfrak{#1}}}
\newcommand{\shma}{{\sh M_A}}
\newcommand{\shia}{{\sh I_A}}
\newcommand{\shc}{{\sh C}}
\newcommand{\bund}[1]{{\mathsf{#1}}}
\newcommand{\bundA}{{\bund A}}
\newcommand{\bundB}{{\bund B}}
\newcommand{\bundI}{{\bund I}}
\def\C*{{\sl C*}-algebra}
\def\Cs*{{\sl C*}-subalgebra}
\def\Cbund*{{\sl C*}-bundle}
\def\Calg*{{\sl C*}-algebraic}
\def\AF/{{\sl A\kern-.5pt F}-algebra}
\def\AW*{{\sl AW*}-algebra}
\def\CXsheaf/{{$\shc(X)$-sheaf}}
\def\CXsheaves/{{$\shc(X)$-sheaves}}
\newtheorem{lem}{Lemma}[section]
\newtheorem{corol}[lem]{Corollary}
\newtheorem{theor}[lem]{Theorem}
\newtheorem{prop}[lem]{Proposition}
\theoremstyle{remark}
\newtheorem{exem}[lem]{\bf Example}
\newtheorem{remark}[lem]{Remark}
\author{Pere Ara}
\address{Departament de Matem\`atiques\\
Universitat Aut\`onoma de Barcelona\\
08193 Bellaterra (Barcelona)\\
Spain}
\email{para@mat.uab.cat}
\author{Martin Mathieu}
\address{Department of Pure Mathematics\\
Queen's University Belfast\\
Belfast BT7 1NN\\
Northern Ireland}
\email{m.m@qub.ac.uk}
\thanks{The first-named author was partially supported by DGI MICIIN-FEDER MTM2008-06201-C02-01
and by the Comissionat per Universitats i Recerca de la Generalitat de Catalunya
through the grant 2009SGR 1389. This work was carried out during a stay of the second-named author
at the Centre de Recerca Matem\`atica (Barcelona) supported by the Ministerio de Educaci\'on under SAB2009-0147.}
\subjclass[2000]{Primary 46L05. Secondary 46L06, 46M20}
\keywords{Local multiplier algebra, injective envelope, Hausdorff primitive ideal space, quasicentral \textsl{C*}-algebra,
sheaves of \textsl{C*}-algebras}
\title[The second local multiplier algebra]{When is the second local multiplier algebra of a \textsl{C*}-algebra equal to the first?}
\begin{document}

\begin{abstract}
We discuss necessary as well as sufficient conditions for the second iterated local multiplier algebra of a separable \C* to agree with the first.
\end{abstract}

\maketitle

\section{Introduction}\label{sect:intro}

\noindent
After the first example of a \C* $A$ with the property that the second local multiplier algebra $\Mloc\mloc$ of $A$
differs from its first, $\mloc$, was found in~\cite{AM06}---thus answering a question first raised in~\cite{Ped78}---, the
behaviour of higher local multiplier algebras began to attract some attention; see, e.g., \cite{AM08}, \cite{ArgFar09},~\cite{ArgFarNext}.
That the local multiplier algebra can have a somewhat complicated structure was already exhibited in~\cite{AM99}, where
an example of a non-simple unital \C* $A$ was given such that $\mloc$ is simple (and hence, evidently, $\Mloc\mloc=\mloc$ in this case).

It was proved in \cite{Somerset} that, if $A$ is a separable unital \C*, $\Mloc\mloc=\mloc$, provided the primitive ideal space $\prima$
contains a dense $G_\delta$ subset of closed points. One of our goals here is to see how this result can be obtained in a straightforward
manner using the techniques developed in~\cite{AM10}.
The key to our argument is the following observation. Every element in $\mloc$ can be realised as a bounded continuous section,
defined on a dense $G_\delta$ subset of $\prima$, with values in the upper semicontinuous \Cbund* canonically associated to the multiplier
sheaf of~$A$. The second local multiplier algebra $\Mloc\mloc$ is contained in the injective envelope $I(A)$ of~$A$,
cf.~\cite{FrankPaulsen}, \cite{AM08}, and every element of $I(A)$ has a similar description as a continuous section of a \Cbund*
corresponding to the injective envelope sheaf of~$A$. To show that $\Mloc\mloc\subseteq\mloc$ it thus suffices to relate
sections of these bundles in an appropriate way. In fact, we shall obtain a more general result in Section~\ref{sect:sheaves}
which, in particular, unifies the commutative and the unital case. The notion of a quasicentral \C*, first studied by Delaroche
\cite{Del67, Del68}, turns out to be crucial.

It emerges, however, that the short answer to the long question in this paper's title is: \textit{rarely}.
In Section~\ref{sect:results}, we provide a systematic approach to producing separable \C*s with the property that their second local multiplier algebra
contains the first as a proper \Cs*. We obtain a quick proof of Somerset's result \cite{Somerset} that $\Mlocit A2=\Mlocit A3$
for a separable \C* $A$ which has a dense $G_\delta$ subset of closed points in its primitive spectrum in Theorem~\ref{thm:mloc3=mloc2} below.
In our approach, the injective envelope is employed as a `universe' in which all \C*s considered
are contained as \Cs*s. However, in contrast to previous studies, we do not need additional information on the injective envelope itself.

In the following we will focus on separable \C*s for a variety of reasons. One of them is the non-commutative Tietze extension theorem,
another one the need for a strictly positive element in the bounded central closure of the \C*. Moreover, just as in
Somerset's paper~\cite{Somerset}, Polish spaces (in the primitive spectrum) will play a decisive role.
Sections~\ref{sect:prelims} and~\ref{sect:results} are fairly self-contained, while Section~\ref{sect:sheaves} relies on the sheaf
theory developed in~\cite{AM10}.

\section{Preliminaries}\label{sect:prelims}

\noindent
For a \C*~$A$, we denote by $\prima$ its primitive ideal space (with the Jacobson topology); this is second countable if $A$
is separable. For an open subset $U\subseteq\prima$, let $A(U)$ stand for the closed ideal of $A$ corresponding to~$U$.
(Hence, $t\in U$ if and only if $A(U)\nsubseteq t$.) We denote by $\calD$ and $\calT$ the sets of dense open subsets and
dense $G_\delta$ subsets of $\prima$, respectively, and consider them directed under reverse inclusion. The local multiplier
$\mloc$ is defined by $\mloc=\Dirlim{U\in\calD}\,M(A(U))$, where, for $U,V\in\calD$ with $V\subseteq U$, the injective *-homomorphism
$M(A(U))\to M(A(V))$ is given by restriction. We put $Z=Z(\mloc)$, the centre of $\mloc$.
For more details on, and properties of, $\mloc$, we refer to~\cite{AM03}.

A point $t\in\prima$ is said to be \textit{separated\/} if $t$ and every point $t'\in\prima$ which is not in the closure of~$\{t\}$ can be
separated by disjoint neighbourhoods. Let $\sepa$ be the set of all separated points of a \C*~$A$. If $A$ is separable then $\sepa$
is a dense $G_\delta$ subset of $\prima$ \cite[Th\'eoreme~19]{Dix68}.

The following result is useful when computing the norm of a (local) multiplier.

\begin{lem}\label{lem:cty-on-dense-Gdelta}
Let\/ $A$ be a separable \C*, and let\/ $T\subseteq\sepa$ be a dense $G_\delta$ subset. For a countable family\/ $\{f_n\mid n\in\NN\}$
of bounded lower semicontinuous real-valued functions on\/ $T$ there exists a dense $G_\delta$ subset\/ $T'\subseteq T$ such that\/
$\rest{f_n}{T'}$ is continuous for each\/ $n\in\NN$.
\end{lem}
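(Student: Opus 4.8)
The plan is to handle each $f_n$ separately by means of the classical Baire-category fact that a lower semicontinuous function on a Baire space is continuous off a meagre set, and then to intersect. For $n\in\NN$ let $C_n\subseteq T$ denote the set of points at which $\rest{f_n}{T}$ is continuous in the relative topology of $T$. Introducing the oscillation $\omega_n(t)=\inf_U\operatorname{diam}f_n(U)$, the infimum being over the relative neighbourhoods $U$ of $t$, one has $\omega_n(t)=0$ exactly when $\rest{f_n}{T}$ is continuous at $t$, so $C_n=\bigcap_{k\in\NN}\{t\in T:\omega_n(t)<1/k\}$; each set occurring here is relatively open, hence $C_n$ is a $G_\delta$ subset of $T$. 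Put $T'=\bigcap_{n\in\NN}C_n$. Then $\rest{f_n}{T'}$ is continuous for every $n$ by construction, and it remains to show that each $C_n$ is dense in $T$ and that $T'$ is then a dense $G_\delta$ subset of $\prima$.

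The only non-formal ingredient is that $T$ is a \emph{Baire space}. This is standard: $T$ is a dense $G_\delta$ subset of $\sepa$ (dense because it is dense in $\prima$), and, $A$ being separable, $\sepa$ is a Polish space in its relative topology; a $G_\delta$ subset of a Polish space is Polish, hence Baire. (Equivalently, one may invoke that $\prima$ itself is a Baire space, together with the fact that a dense $G_\delta$ subset of a Baire space is again Baire.) Granting this, the bookkeeping is routine: a countable intersection of dense $G_\delta$ subsets of the Baire space $T$ is again a dense $G_\delta$ subset of $T$, so once the $C_n$ are dense, $T'$ is a dense $G_\delta$ subset of $T$; and since $T$ is itself a dense $G_\delta$ subset of $\prima$, any dense $G_\delta$ subset of $T$ is automatically a dense $G_\delta$ subset of $\prima$. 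Thus everything reduces to the density of the $C_n$.

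For that, fix $n$ and write $f=\rest{f_n}{T}$. By lower semicontinuity the set $A_q=\{t\in T:f(t)\le q\}$ is relatively closed for each $q\in\QQ$, hence its boundary $\partial A_q$ is closed and nowhere dense in $T$; as $T$ is Baire, $C:=T\setminus\bigcup_{q\in\QQ}\partial A_q$ is dense. I claim $C\subseteq C_n$. Let $t\in C$ and $\eps>0$, and choose $q_1,q_2\in\QQ$ with $q_1<f(t)<q_2$ and $q_2-q_1<\eps$. Then $\{f>q_1\}$ is a relative neighbourhood of $t$, and since $t\in A_{q_2}\setminus\partial A_{q_2}=\operatorname{int}A_{q_2}$ there is a relative neighbourhood $V\subseteq A_{q_2}$ of $t$; on $\{f>q_1\}\cap V$ the function $f$ takes values in $(q_1,q_2]$, so $\omega_n(t)\le q_2-q_1<\eps$. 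As $\eps$ was arbitrary, $t\in C_n$, so $C_n\supseteq C$ is dense, completing the argument.

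The genuine obstacle is therefore concentrated in the second paragraph: one must have available the (standard, but not entirely elementary) fact that the relevant subspace of $\prima$ is a Baire space, and it is precisely here that separability of $A$ and the passage to the separated points are used. Once that is in hand the rest is the textbook reasoning for lower semicontinuous functions plus elementary manipulations of $G_\delta$ sets; in particular boundedness of the $f_n$ is not needed for this statement.
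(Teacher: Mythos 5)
Your proof is correct, and its topological frame is the same as the paper's: pass to the Polish space $\sepa$, observe that the dense $G_\delta$ subset $T$ is again Polish and hence Baire, treat each $f_n$ separately, and intersect countably many dense $G_\delta$ sets (your remark that a dense $G_\delta$ subset of $T$ is automatically a dense $G_\delta$ subset of $\prima$, because $T$ itself is one, correctly settles in which space $T'$ is required to be dense). Where you genuinely diverge is in the single-function step. The paper quotes Kuratowski's theorem that a bounded Borel function on a Polish space becomes continuous when restricted to a suitable dense $G_\delta$ subset of its domain; you instead prove directly, via the oscillation function and the nowhere dense boundaries of the closed sets $\{f\le q\}$, $q\in\QQ$, that a lower semicontinuous real-valued function on a Baire space is continuous at every point of a dense $G_\delta$ set, and your verification of this (openness of the oscillation sets, $t\in\operatorname{int}\{f\le q_2\}$, values trapped in $(q_1,q_2]$) is sound. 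Your route is more elementary and self-contained, uses only the Baire property of $T$ rather than the full strength of Polishness, produces genuine points of continuity of $f_n$ on $T$ rather than mere continuity of the restriction, and, as you observe, does not need boundedness of the $f_n$; the trade-off is that it exploits lower semicontinuity in an essential way and would not extend to general Borel functions, which the theorem cited in the paper does cover. Both arguments establish the lemma.
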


\noindent
This is an immediate consequence of the following well-known facts:
$\sepa$ is a Polish space (that is, homeomorphic to a separable, complete metric space) by \cite[Corollaire~20]{Dix68}
and hence any $G_\delta$ subset of $\sepa$ is a Polish space \cite[4.2.2]{Ped79};
every Polish space is a Baire space \cite[4.2.5]{Ped79};
any bounded Borel function into $\RR$ defined on a Polish space can be restricted to a continuous function on some dense
$G_\delta$ subset of the domain \cite[Sect.~32.II]{Kura}.

\medskip
In~\cite{Somerset}, p.~322, Somerset introduces an interesting \Cs* of $\mloc$, which we will denote by $K_A$: \
$K_A$ is the closure of the set of all elements of the form $\sum_{n\in\NN}a_nz_n$, where $\{a_n\}\subseteq A$ is a bounded family
and $\{z_n\}\subseteq Z$ consists of mutually orthogonal projections.
(These infinite sums exist in $\mloc$ by \cite[Lemma~3.3.6]{AM03}, for example. Note also that $Z$ is countably decomposable since
$A$ is separable.) It is easy to see that, if the above family $\{a_n\}$
is chosen from $K_A$ instead of~$A$, then the sum $\sum_{n\in\NN}a_nz_n$ still belongs to~$K_A$ (\cite[Lemma~2.5]{Somerset}).

The significance of the \Cs* $K_A$ is explained by the following result.
Let $\Ice A$ denote the set of all closed essential ideals of a \C*~$A$.
We denote by $\Mlocit{A}{n}=\Mlocbig{\Mlocit{A}{n-1}}$, $n\geq2$ the $n$-fold iterated local multiplier algebra of~$A$.

\begin{lem}\label{lem:why-K}
Let\/ $A$ be a \C* such that\/ $K_A\in\Ice\mloc$.
\renewcommand\theenumi{\roman{enumi}}
\renewcommand\labelenumi{\rm(\theenumi)}
\begin{enumerate}
\item If\/ $K_I=K_A$ for all\/ $I\in\Ice A$ then\/ $\Mloc{K_A}=M(K_A)$.
\item If\/ $\Mloc{K_A}=M(K_A)$ then\/ $\Mlocit{A}{n+1}=\Mlocit{A}{n}$ for all\/ $n\geq2$.
\end{enumerate}
\end{lem}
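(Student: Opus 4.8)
The plan is to exploit two basic hereditary properties of the local multiplier algebra construction: that $M_{\textup{\rm loc}}$ of an essential ideal agrees with $M_{\textup{\rm loc}}$ of the whole algebra, and that $M_{\textup{\rm loc}}(A)=M(A)$ precisely says $A$ coincides with its ``own'' local multiplier algebra. For part~(i), the hypothesis $K_A\in\Ice\mloc$ together with \cite[Lemma~3.3.6]{AM03}-type facts lets us identify the dense open subsets of $\prim{K_A}$ with (traces of) those of $\prim\mloc$, hence of $\prima$. First I would observe that for $U\in\calD$ the ideal $K_A(U)$ of $K_A$ is again of the same Somerset type, built from a bounded family in $K_A$ (equivalently, by \cite[Lemma~2.5]{Somerset}, in $A$) and mutually orthogonal central projections; in fact $K_A(U)=K_{A(U)}$, using that $A(U)\in\Ice A$ when $U$ is dense and that $Z(M(A(U)))$ is a corner, containing enough projections, of $Z=Z(\mloc)$. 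The hypothesis $K_I=K_A$ for all $I\in\Ice A$ then forces $K_{A(U)}=K_A$ for every $U\in\calD$, so each ideal $K_A(U)$ is all of $K_A$. Consequently $M_{\textup{\rm loc}}(K_A)=\Dirlim{U}\,M(K_A(U))=M(K_A)$, the direct system being eventually constant.

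For part~(ii), the strategy is to climb the tower one floor at a time, with $K_A$ serving as a fixed essential ideal that is ``rigid'' under $M$. Since $K_A\in\Ice\mloc$, we have the canonical identification $M_{\textup{\rm loc}}(\mloc)=M_{\textup{\rm loc}}(K_A)$ (essential ideals do not change the local multiplier algebra, by \cite[3.2]{AM03}); combined with the hypothesis $M_{\textup{\rm loc}}(K_A)=M(K_A)$, this gives $\Mlocit A3=M_{\textup{\rm loc}}(\mloc)=M(K_A)$. Next I would check that $K_A$ remains an essential ideal of $\Mlocit A3=M(K_A)$ — indeed $K_A$ is always an essential ideal of its own multiplier algebra — and that it still has the Somerset form relative to $\Mlocit A3$, so that one may re-apply the identification $M_{\textup{\rm loc}}(\Mlocit A3)=M_{\textup{\rm loc}}(K_A)=M(K_A)=\Mlocit A3$. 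This yields $\Mlocit A4=\Mlocit A3$, and the same argument, now with $K_A$ an essential ideal of $\Mlocit A{n}$ for every $n\ge3$, gives $\Mlocit A{n+1}=\Mlocit A{n}$ for all $n\ge2$ by an obvious induction; the base case $n=2$ is exactly $\Mlocit A3=M(K_A)=M_{\textup{\rm loc}}(K_A)$ combined with $M_{\textup{\rm loc}}(\mloc)=M_{\textup{\rm loc}}(K_A)$.

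The point requiring care — and the main obstacle — is the persistence of the Somerset structure up the tower: one must verify that once we pass to $M(K_A)$, the ideal $K_A$ is still the closed span of sums $\sum_n a_n z_n$ with $a_n$ bounded and $z_n$ mutually orthogonal central projections now living in $Z(M(K_A))$, and that no new central projections appear that would enlarge these sums. This hinges on the fact that the centre is stable under the passage from an essential ideal to its multiplier algebra in the relevant sense, together with countable decomposability, which is inherited since all the algebras in question sit inside $I(A)$ and $Z(I(A))$ is an $AW^*$-algebra; I would also need that $K_A\in\Ice{M_{\textup{\rm loc}}(K_A)}$, which is automatic from $K_A\in\Ice\mloc$ and the essential-ideal identification. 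Once these bookkeeping facts are in place, the induction runs without further difficulty, the whole tower collapsing onto $M(K_A)$ from level $2$ onwards.
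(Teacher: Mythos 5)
Your part (ii) is essentially the paper's argument: from $K_A\in\Ice\mloc$ and the invariance of $\Mloc{}$ under passage to closed essential ideals one gets $\Mlocit{A}{2}=\Mloc\mloc=\Mloc{K_A}=M(K_A)$, and then $\Mlocit{A}{3}=\Mloc{M(K_A)}=\Mloc{K_A}=M(K_A)=\Mlocit{A}{2}$ because $K_A$ is essential in its own multiplier algebra; the induction is immediate. (Watch the index slip: $\Mloc\mloc$ is $\Mlocit{A}{2}$, not $\Mlocit{A}{3}$.) Your closing worries about the ``persistence of the Somerset structure'' up the tower are unnecessary --- nothing about the internal form of $K_A$ is used in (ii) beyond the two facts just quoted.

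Part (i), however, has a genuine gap. By definition $\Mloc{K_A}=\Dirlim{J\in\Ice{K_A}}\,M(J)$, so you must control $M(J)$ for an \emph{arbitrary} closed essential ideal $J$ of $K_A$. You replace this directed set by the family of ideals ``$K_A(U)$'', $U\in\calD$, via an identification of the dense open subsets of $\prim{K_A}$ with those of $\prima$; no such identification is available ($K_A$ sits between $A$ and $\mloc$, and its primitive ideal space is in general much larger than $\prima$), and even if it were, under your own hypothesis the family $\{K_{A(U)}\}$ collapses to the single ideal $K_A$, which is certainly not cofinal in $\Ice{K_A}$. The paper's proof instead takes an arbitrary $J\in\Ice{K_A}$ and $m\in M(J)$, puts $I=J\cap A\in\Ice{A}$, observes that $mI\subseteq mJ\subseteq J\subseteq K_A$, and then uses the stability of $K_A$ under sums $\sum_n y_nz_n$ with $\{y_n\}$ a bounded family in $K_A$ (Somerset's Lemma~2.5) to get $m\bigl(\sum_n x_nz_n\bigr)=\sum_n mx_nz_n\in K_A$ for every bounded family $\{x_n\}\subseteq I$ and orthogonal central projections $\{z_n\}$; the hypothesis $K_I=K_A$ then gives $mK_A=mK_I\subseteq K_A$, i.e.\ $m\in M(K_A)$, whence $M(J)\subseteq M(K_A)$ for every $J$ and the direct limit equals $M(K_A)$. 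This intersect-with-$A$ step, which converts an arbitrary $J\in\Ice{K_A}$ into data to which the hypothesis applies, is the missing idea in your argument.
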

\begin{proof}
Let $J\in\Ice{K_A}$; then $M(K_A)\subseteq M(J)$. Let $I=J\cap A$; then $I\in\Ice A$ by \cite[Lemma~2.3.2]{AM03}.
By assumption, we therefore have $K_I=K_A$. Let $m\in M(J)$. As $mI\subseteq K_A$, whenever $\{x_n\}$ is a bounded
family in $I$ and $\{z_n\}$ is a family of mutually orthogonal projections in $Z$, we obtain
\[
m\bigl(\sum_n x_nz_n\bigr)=\sum_n mx_nz_n\in K_A
\]
entailing that $mK_A=mK_I\subseteq K_A$, that is, $m\in M(K_A)$. Consequently, \hbox{$M(J)\subseteq M(K_A)$} which implies~(i).

Towards (ii) observe that $M(K_A)=\Mloc{K_A}=\Mloc\mloc$ by hypothesis.
Let $J\in\Ice{\Mlocit{A}{2}}$. Then $J\cap K_A\in\Ice\mloc$ and, since $J\in\Ice{M(K_A)}$, $J\cap K_A\in\Ice J$. As a result,
\begin{equation*}
M(J)\subseteq M(J\cap K_A)\subseteq\Mloc\mloc=M(K_A)
\end{equation*}
and the reverse inclusion $M(K_A)\subseteq M(J)$ is obvious. We conclude that $\Mlocit{A}{3}=M(K_A)=\Mlocit{A}{2}$
which entails the result.
\end{proof}

The next result tells us how to detect multipliers of $K_A$ inside~$I(A)$.
\begin{lem}\label{lem:mult-KA}
Let\/ $A$ be a separable \C* and let\/ $y\in I(A)$. If\/ $ya\in K_A$ for all\/ $a\in A$ then\/ $y\in M(K_A)$.
\end{lem}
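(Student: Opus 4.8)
The plan is to show that $y$ multiplies $K_A$ back into itself from both sides; since $y \in I(A)$ and $K_A$ is a \Cs* of $\mloc \subseteq I(A)$, establishing $yK_A \subseteq K_A$ and $K_A y \subseteq K_A$ will place $y$ in $M(K_A)$. By hypothesis $ya \in K_A$ for every $a \in A$, so the task reduces to extending this from $A$ to all of $K_A$. Recall that a generic element of $K_A$ is a norm-limit of elements $\sum_{n\in\NN} a_n z_n$ with $\{a_n\}$ a bounded family in $A$ and $\{z_n\} \subseteq Z$ mutually orthogonal projections. By continuity of multiplication by the fixed bounded operator $y$, it suffices to check that $y\bigl(\sum_n a_n z_n\bigr) \in K_A$ for each such sum, and then that $K_A y \subseteq K_A$.

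First I would handle the right action. For a sum $x = \sum_n a_n z_n$ as above, one wants $yx = \sum_n (y a_n) z_n$. The elements $z_n$ lie in $Z = Z(\mloc)$, and $y$ lies in $I(A)$; the point is that the $z_n$, being central projections in $\mloc$, remain central for the relevant module structure on $I(A)$ — more precisely, $Z$ embeds into $Z(I(A))$, so $y$ commutes with each $z_n$ and the formal manipulation $y\bigl(\sum_n a_n z_n\bigr) = \sum_n y a_n z_n = \sum_n (ya_n) z_n$ is valid (the series converges in $\mloc \subseteq I(A)$ because the $z_n$ are mutually orthogonal and $\{ya_n\}$ is bounded, invoking \cite[Lemma~3.3.6]{AM03} as in the definition of $K_A$). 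Since each $ya_n \in K_A$ by hypothesis and $K_A$ is closed under forming such central sums with coefficients from $K_A$ itself (\cite[Lemma~2.5]{Somerset}, as quoted in the excerpt), we get $yx \in K_A$; passing to norm-limits, $yK_A \subseteq K_A$.

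For the left action $K_A y \subseteq K_A$, I would argue symmetrically. One needs $x' y \in K_A$ whenever $x' = \sum_n a_n z_n$; writing $x' y = \sum_n a_n (z_n y) = \sum_n a_n (y z_n) = \sum_n (a_n y) z_n$ using centrality of $z_n$ again, it suffices to know $a_n y \in K_A$. This is the adjoint-type statement to the hypothesis: from $ya \in K_A$ for all $a \in A$ we obtain, taking adjoints, $a^* y^* \in K_A$ for all $a$, i.e. $ay^* \in K_A$ for all $a \in A$; but I actually need $ay \in K_A$. The cleanest route is to observe that the hypothesis is genuinely two-sided once one notes $K_A = K_A^*$ and that $I(A)$ carries an involution; applying the already-established right-multiplication result to $y^*$ gives $y^* K_A \subseteq K_A$, hence $K_A y = (y^* K_A)^* \subseteq K_A^* = K_A$.

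The main obstacle I anticipate is justifying that $y \in I(A)$ commutes with the central projections $z_n \in Z(\mloc)$ and that the infinite sums $\sum_n (ya_n) z_n$ genuinely converge inside $\mloc$ (not merely in $I(A)$) so that membership in $K_A$ even makes sense — this rests on identifying $Z(\mloc)$ with a subalgebra of $Z(I(A))$ and on the countable decomposability of $Z$ that the excerpt has already recorded. Granting that, everything else is the routine density-and-continuity bookkeeping sketched above.
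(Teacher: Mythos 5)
There is a genuine gap at the heart of your argument. The equality $y\bigl(\sum_n a_nz_n\bigr)=\sum_n(ya_n)z_n$ is not a ``valid formal manipulation'': the series $\sum_n a_nz_n$ is \emph{not} the norm limit of its partial sums (already $\sum_n z_n$ with infinitely many non-zero $z_n$ fails to converge in norm), so norm-continuity of multiplication by $y$ buys you nothing, and the mode of convergence in which these sums do exist in $\mloc$ (via \cite[Lemma~3.3.6]{AM03}) is not known to be preserved under multiplication by an arbitrary element of $I(A)$. Commutation of $y$ with each $z_j$ only tells you that the two elements $y'=y\sum_n z_na_n\in I(A)$ and $\sum_n z_n(ya_n)\in K_A$ have the same ``components'', namely $z_jy'=z_jya_j=z_j\sum_n z_n(ya_n)$ for every $j$; it does not tell you that the elements themselves coincide, because $y'$ is a priori only an element of $I(A)$ and nothing yet guarantees that such an element is determined by the family $(z_jy')_j$. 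That identification is exactly where the work lies, and it is the step your proposal omits. The paper closes it by writing $\sum_j y'z_j=\sum_i w_ia_i$ for mutually orthogonal central projections $w_i$ with $\sum_i w_i=1$ and a bounded family $a_i\in A$, forming the closed essential ideal $I=\bigoplus_i(w_i\mloc\cap A)$ of $A$, verifying $\bigl(y'-\sum_i w_ia_i\bigr)x=0$ for all $x\in I$, and then invoking \cite[Proposition~2.12]{AM08}, which says that an element of $I(A)$ annihilating an essential ideal of $A$ is zero. Your proposed resolution of the ``main obstacle'' --- identifying $Z$ with a subalgebra of $Z(I(A))$ plus countable decomposability --- does not supply this annihilator argument.

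A secondary problem is that your derivation of $K_Ay\subseteq K_A$ is circular. Taking adjoints of the hypothesis gives $ay^*\in K_A$ for all $a\in A$, whereas applying your right-multiplication result to $y^*$ would require $y^*a\in K_A$ for all $a\in A$ --- a different statement, which you correctly note you do not have, but then assert the conclusion anyway. The paper avoids this issue altogether by reducing the whole lemma, via \cite[Lemma~2.5]{Somerset}, to the single identity $y\sum_n z_na_n=\sum_n z_nya_n$; if you insist on verifying both module actions separately, the left one needs its own argument.
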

\begin{proof}
It suffices to show that $y\sum_{n=1}^\infty z_na_n=\sum_{n=1}^\infty z_nya_n$ whenever  $\{a_n\mid n\in\NN\}\subseteq A$ is a bounded family
and $\{z_n\mid n\in\NN\}\subseteq Z$ consists of mutually orthogonal projections, by \cite[Lemma~2.5]{Somerset}.
Without loss of generality we can assume that $\sum_{n=1}^\infty z_n=1$.

Putting $y'=y\sum_{n=1}^\infty z_na_n\in I(A)$ we observe that
\[
z_jy'=yz_j\sum_{n=1}^\infty z_na_n=yz_ja_j=z_jya_j\in K_A
\]
by hypothesis. It is therefore enough to prove that, if $y'\in I(A)$ and $y'z_j\in K_A$ for all $j\in\NN$, where
$\{z_j\mid j\in\NN\}\subseteq Z$ consists of mutually orthogonal projections with $\sum_{j=1}^\infty z_j=1$,
then $y'=\sum_{j=1}^\infty y'z_j$, where the latter is computed in~$K_A$.

The assumption $y'z_j\in K_A$ for all $j\in\NN$ enables us to write $\sum_{j=1}^\infty y'z_j=\sum_{i=1}^\infty w_ia_i$
for some bounded sequence $(a_i)_{i\in\NN}$ in $A$ and a sequence $(w_i)_{i\in\NN}$ consisting of mutually orthogonal
central projections with $\sum_{i=1}^\infty w_i=1$. For each $n\in\NN$,
\[
(w_1+\ldots+w_n)y'=\sum_{i=1}^n w_ia_i.
\]
Each projection $w_i$ comes with a closed ideal $I_i=w_i\mloc\cap A$ and the {\sl C*}-direct sum $I=\bigoplus_{i=1}^\infty I_i$
is a closed essential ideal of~$A$. For $x_i\in I_i$, $1\leq i\leq n$, we have
\begin{equation*}
\begin{split}
\bigl(y'-\sum_{i=1}^\infty w_ia_i\bigr)(x_1+\ldots+x_n)
    &=\bigl(y'-\sum_{i=1}^\infty w_ia_i\bigr)(w_1+\ldots+w_n)(x_1+\ldots+x_n)\\
    &=\Bigl(\sum_{i=1}^n w_ia_i-\sum_{i=1}^n w_ia_i\Bigr)(x_1+\ldots+x_n)=0.
\end{split}
\end{equation*}
Therefore $\bigl(y'-\sum_{i=1}^\infty w_ia_i\bigr)x=0$ for all $x\in I$ which implies that $y'=\sum_{i=1}^\infty w_ia_i$
by \cite[Proposition~2.12]{AM08}.
\end{proof}

Recall that the \textit{bounded central closure}, $\closa$, of a \C* $A$ is the \Cs* $\ol{AZ}$ of $\mloc$ \cite[Section~3.2]{AM03}.
If $A$ is separable then $\closa$ is $\sigma$-unital, which will be useful in Section~\ref{sect:results}.

In Section~\ref{sect:sheaves}, we shall need the following auxiliary result whose proof is analogous to the one of
\cite[Lemma 2.2]{Somerset} but we include it here for completeness.

\begin{lem}\label{lem:like-somersets}
Let\/ $A$ be a separable \C*, $B$ a \Cs* of\/ $\mloc$ containing\/~$A$, and\/ $J$ a closed essential ideal of\/ $B$.
There is\/ $h\in J$ such that\/ $hz\ne0$ for each non-zero projection\/ $z\in Z$.
\end{lem}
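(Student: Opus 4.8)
The plan is to show first that the central support of $J$ in $\mloc$ is $1$, and then to realise this by a single positive element of $J$ via an exhaustion argument. I will use that $A$ is essential in $\mloc$ (so that every non-zero closed ideal of $\mloc$ meets $A$ non-trivially) and that $Z$ is a countably decomposable \AW*; both facts are recorded in \cite{AM03}.

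First I would prove that $zJ\ne0$ for every non-zero projection $z\in Z$. If $zJ=0$, then, $z\mloc$ being a non-zero closed ideal of $\mloc$, there is $a\in(z\mloc)\cap A$ with $a\ne0$; since $a=za$ and $z$ is central, $aj=zaj=a(zj)=0$ for every $j\in J$, so $aJ=0$ with $0\ne a\in A\subseteq B$, contradicting the essentiality of $J$ in $B$. Writing $c(j)\in Z$ for the central support of $j\in J$ — the least projection of $Z$ with $c(j)\,j=j$, so that $1-c(j)$ is the largest projection of $Z$ annihilating $j$ — the statement just proved means that no non-zero projection of $Z$ is orthogonal to every $c(j)$, $j\in J$; hence $\sup\{c(j):j\in J,\ j\ge0,\ \|j\|\le1\}=1$ in~$Z$.

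Next, since $Z$ is countably decomposable, this supremum is already attained along a countable subfamily: a maximal family $\{e_k\}$ of mutually orthogonal non-zero projections of $Z$ with each $e_k\le c(j_k)$ for suitable $j_k\in J$, $0\le j_k$, $\|j_k\|\le1$, must be countable, and maximality together with the previous paragraph forces $\sum_k e_k=1$, whence $\sup_k c(j_k)=1$. I would then set $h=\sum_k 2^{-k}j_k$; the series converges in norm, so $h\in J$ and $h\ge0$, and if $z\in Z$ is a projection with $hz=0$ then $\sum_k 2^{-k}(zj_k)=0$ is a norm-convergent sum of positive elements, forcing $zj_k=0$ — hence $z\le1-c(j_k)$ — for every $k$; therefore $z\le1-\sup_k c(j_k)=0$, i.e.\ $z=0$. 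Thus $hz\ne0$ for every non-zero projection $z\in Z$, as required. The one point requiring care is the first step, where the essentiality of $A$ in $\mloc$ and of $J$ in $B$ have to be combined correctly; the remainder is a routine exhaustion inside the \AW*~$Z$.
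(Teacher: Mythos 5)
Your proof is correct and follows essentially the same route as the paper's: establish that no non-zero projection of $Z$ annihilates $J$, use countable decomposability of $Z$ in a maximality/exhaustion argument to extract a countable family in $J$ whose central supports have supremum $1$, and take $h=\sum_k 2^{-k}j_k$. The only (immaterial) differences are that you work with mutually orthogonal projections $e_k\le c(j_k)$ and positive $j_k$ where the paper takes norm-one elements with mutually orthogonal central supports and argues via $\supp{h}$ at the end.
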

\begin{proof}
By \cite[Proposition~2.14]{AM08}, $I(A)=I(B)=I(\mloc)$ and thus $Z({\Mloc B})=Z$ by \cite[Theorem~4.12]{AM08}.
For $x\in\mloc$, let $\supp x$ denote the central support of~$x$, see \cite{AM03}, page~52 and Remark~3.3.3.
Let $\{h_i\}$ be a maximal family of norm-one
elements $h_i\in J$ such that their central supports $\supp{h_i}$ are mutually orthogonal. Since $A$ is separable, $Z$ is countably decomposable,
hence we may enumerate the non-zero central supports as $\supp{h_n}$, $n\in\NN$. Put $h=\sum_{n=1}^\infty 2^{-n}h_n\in J$.
As $J$ is essential, for a non-zero projection $z\in Z$, there is $h'\in J$ with $h'z\ne0$. If $hz=0$ then $\supp h z=0$ and hence $\supp{h_n}z=0$
for all $n\in\NN$. It follows that $\supp{h_n}\supp{h'z}\leq\supp{h_n}z=0$ which would lead to a contradiction to the
maximality assumption on~$\{h_n\}$. As a result, $hz\ne0$ for every non-zero projection $z\in Z$.
\end{proof}

\section{The second local multiplier algebra}\label{sect:results}

\noindent
In this section we discuss some necessary and some sufficient conditions for the first and the second local multiplier
algebra of a separable \C* $A$ to coincide. The general strategy is that this cannot happen if and only if $\mloc$ contains an essential ideal $K$
with the property that $M(K)\setminus\mloc\neq\emptyset$.

The following proposition introduces the decisive topological condition in~$\prima$.

\begin{prop}\label{prop:K-ess_ideal}
Let\/ $A$ be a separable \C* such that\/ $\prima$ contains a dense $G_\delta$ subset consisting of closed points.
Then\/ $K_A$ is an essential ideal in\/ $\mloc$.
\end{prop}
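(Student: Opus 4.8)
The plan is to show that $K_A$ is a closed, self-adjoint, two-sided ideal of $\mloc$ which is essential. Essentiality costs nothing: since $\mloc$ is unital, $1\in Z$ is a projection, so $A\subseteq K_A$ (take $z_1=1$), and $A$ is an essential ideal of $\mloc$, hence so is $K_A$. Now $K_A$ is norm-closed by definition and visibly self-adjoint, and left multiplication by a fixed element of $\mloc$ is norm-continuous; so, using that $\sum_n(ma_n)z_n\in K_A$ whenever $\{ma_n\}$ is bounded in $K_A$ and $\{z_n\}\subseteq Z$ consists of pairwise orthogonal projections (\cite[Lemma~2.5]{Somerset}, in the form recalled in Section~\ref{sect:prelims}), the whole statement reduces to the single claim
\[
mA\subseteq K_A\qquad\text{for every }m\in\mloc
\]
(the reverse-sided ideal property then following by taking adjoints).

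To prove this claim, fix $m\in\mloc$ and $a\in A$ and pick $U\in\calD$ with $ma\in M(A(U))$. Let $Y\in\calT$ consist of closed points of $\prima$. The key use of the hypothesis is the observation that a closed point $t_0$ of $\prima$ is a maximal ideal of $A$ (any closed ideal strictly between $t_0$ and $A$ would be contained in a primitive ideal strictly containing $t_0$, contradicting $\overline{\{t_0\}}=\{t_0\}$); hence $A/t_0$ is simple, so for $t_0\in U$ the nonzero closed ideal $(A(U)+t_0)/t_0$ of $A/t_0$ equals $A/t_0$, and therefore the image $(ma)^{(t_0)}=m^{(t_0)}(a+t_0)$ of $ma$ in $M(A/t_0)$ already lies in $A/t_0$. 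Fix a countable dense set $\{b_k\}\subseteq A$. Since $A$ is separable, $\sepa$ is a Polish space and $Y\cap U\cap\sepa$ is a dense $G_\delta$ subset of $\prima$; applying Lemma~\ref{lem:cty-on-dense-Gdelta} to the bounded lower semicontinuous norm functions $t\mapsto\|(ma-b_k)^{(t)}\|$ ($k\in\NN$) to which that lemma applies, we obtain a dense $G_\delta$ set $T'\subseteq Y\cap U\cap\sepa$ on which all of them are continuous. Given $\eps>0$, the relatively open sets $G_k=\{t\in T':\|(ma-b_k)^{(t)}\|<\eps\}$ then cover $T'$: for $t_0\in T'$ choose $b\in A$ with $(ma)^{(t_0)}=b+t_0$ by the preceding remark, then $k$ with $\|b-b_k\|<\eps$, so that $\|(ma-b_k)^{(t_0)}\|=\|(b-b_k)+t_0\|<\eps$.

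To finish, replace each $G_k$ by the largest open set $H_k\subseteq\prima$ with $H_k\cap T'\subseteq G_k$; then $H_k\cap T'=G_k$ and so $\bigcup_kH_k$ is a dense open set containing $T'$. By second countability and Zorn's lemma choose a maximal pairwise disjoint family $\{V_j\}$ of nonempty open subsets of $\prima$, each contained in some $H_{k(j)}$; by maximality $\bigcup_jV_j$ is dense. The ideals $A(V_j)$ are then mutually orthogonal with essential $c_0$-sum, so their central support projections $z_j\in Z$ are pairwise orthogonal with $\sum_jz_j=1$. Set $k_0=\sum_jb_{k(j)}z_j\in K_A$. For each $j$ the norm of the local multiplier $(ma-b_{k(j)})z_j$, computed in the summand $z_j\mloc$, is the supremum of its fibrewise norms over the dense set $V_j\cap T'\subseteq G_{k(j)}$, hence at most $\eps$; therefore $\|ma-k_0\|=\sup_j\|(ma-b_{k(j)})z_j\|\le\eps$. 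As $\eps>0$ was arbitrary and $K_A$ is closed, $ma\in K_A$, which proves the claim.

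The heart of the matter is the short structural remark that a closed point of $\prima$ corresponds to a \emph{simple} quotient of $A$, which forces the local multiplier $ma$ to take, at each such point, a value realised by an element of $A$; the rest is a Baire-category argument on $\sepa$. I expect the genuinely delicate bookkeeping to be the careful treatment of the fibrewise values and norms $\|(ma-b_k)^{(t)}\|$ of local multipliers at separated points — ensuring Lemma~\ref{lem:cty-on-dense-Gdelta} is applicable and that operator norms are recovered as suprema over the dense $G_\delta$ set $T'$ — which rests on the semicontinuity facts underlying that lemma.
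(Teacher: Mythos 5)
Your proof is correct and follows essentially the same route as the paper's: reduce to showing $mA\subseteq K_A$, use Baire category on $\sepa$ to make the relevant norm functions continuous on a dense $G_\delta$ set of closed separated points, approximate $ma$ fibrewise by elements of $A$ (using that a closed point gives a simple quotient), and patch with a maximal orthogonal family of central projections. The differences are cosmetic---you bypass the non-commutative Tietze theorem by observing directly that the image of $ma$ in $M(A/t)$ already lies in the ideal $A/t$, and you organise the patching via a maximal disjoint family of open sets rather than of projections---and you share with the paper's own argument the one small unaddressed point that the family $\{b_{k(j)}\}$ must be arranged to be bounded (e.g.\ by choosing the fibrewise lifts with norm at most $\|ma\|+\eps$) before $\sum_j b_{k(j)}z_j$ qualifies as an element of $K_A$.
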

\begin{proof}
Since $K_A$ is a \Cs* of $\mloc$, it suffices to show that, whenever $m$ is a multiplier of a closed essential ideal of $A$
and $a\in K_A$, $ma\in K_A$; in fact, we can assume that $a\in A$, by Lemma~\ref{lem:mult-KA}.

Let $U\subseteq\prima$ be a dense open subset and take $m\in M(A(U))$. For $t\in U$, let $\tilde t\in\prim{M(A(U))}$
denote the corresponding primitive ideal under the canonical identification of $\prima$ with an open dense subset
of $\prim{M(A(U))}$. Let \hbox{$\{b_n\mid n\in\NN\}$} be a countable dense subset of $A$, and let $T$
be the dense $G_\delta$ subset $T=\sepa\cap U$.
Note that, by Lemma~\ref{lem:cty-on-dense-Gdelta}, there is a dense $G_\delta$ subset $T'\subseteq T$ such that
$t\mapsto\|(m-b_n)a+\tilde t\|$ is continuous for all $n\in\NN$ when restricted to~$T'$.

Let $\eps>0$ and take $t\in T'$. Since $A$ is separable and $t$ is a closed point, the canonical mapping
$M(A(U))\to M(A/t)$ is surjective \cite[3.12.10]{Ped79} and, denoting by $\tilde m$ the image of $m$ under this mapping, we have
$(m-b_n)a+\tilde t=(\tilde m-(b_n+t))(a+t)$. As $\{b_n+t\mid n\in\NN\}$ is dense in $A/t$ and $A/t$ is strictly dense
in its multiplier algebra, there is $b_k$ such that $\|(\tilde m-(b_k+t))(a+t)\|<\eps$. By the above-mentioned
continuity there is therefore an open subset $V\subseteq\prima$ containing~$t$ such that
\[
\|(m-b_k)a+\tilde s\|<\eps\qquad(s\in V\cap T').
\]
Letting $z=z_V\in Z$ be the projection from $A(V)+A(V)^\perp$ onto $A(V)$ we conclude that
$\|zma-zb_ka\|=\sup\limits_{s\in V\cap T'}\|(m-b_k)a+\tilde s\|\leq\eps$.

We now choose a (necessarily countable) maximal family $\{z_k\}\subseteq Z$ of mutually orthogonal projections such that
$\|z_kma-z_kb_ka\|\leq\eps$ for each~$k$. Then $\sup z_k=1$ and
$\bigl\|\sum_k (z_kma-z_kb_ka)\bigr\|\leq\eps$. As $ma=\sum_k z_kma$ and $\sum_k z_kb_ka\in K_A$ we conclude that
$ma\in K_A$ as claimed proving that $K_A$ is an ideal in $\mloc$.

In order to show that $K_A$ is essential let $y\in\mloc$ be such that $yK_A=0$. Then, in particular, $yA=0$ and thus $y=0$
by \cite[Proposition~2.3.3]{AM03}.
\end{proof}

The next result was first obtained in \cite[Theorem~2.7]{Somerset} but we believe our approach is more direct and more conceptual.

\begin{theor}\label{thm:mloc3=mloc2}
Let\/ $A$ be a separable \C* such that\/ $\prima$ contains a dense $G_\delta$ subset consisting of closed points.
Then\/ $\Mlocit{A}{3}=\Mlocit{A}{2}$ and coincides with~$M(K_A)$.
\end{theor}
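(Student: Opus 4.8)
The plan is to derive Theorem~\ref{thm:mloc3=mloc2} by combining the two preceding results: Proposition~\ref{prop:K-ess_ideal} gives $K_A\in\Ice{\mloc}$ under the stated hypothesis on $\prima$, and Lemma~\ref{lem:why-K} reduces everything to verifying the two implications (i) and (ii) there. Concretely, once we know $K_A\in\Ice{\mloc}$, part (ii) of Lemma~\ref{lem:why-K} tells us that $\Mlocit A3=\Mlocit A2$ as soon as $\Mloc{K_A}=M(K_A)$, and it also identifies both with $M(K_A)$; and part (i) gives $\Mloc{K_A}=M(K_A)$ provided $K_I=K_A$ for every $I\in\Ice A$. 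So the one genuinely new thing to prove is the stability statement $K_I=K_A$ for all closed essential ideals $I$ of~$A$.

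For that stability statement, first observe that the inclusion $K_I\subseteq K_A$ is immediate from the definition, since $I\subseteq A$ and $Z(\Mloc I)=Z$ (the latter because $I$ essential in $A$ gives $I(I)=I(A)$, hence $\Mloc I$ and $\mloc$ share the same centre, as in the proof of Lemma~\ref{lem:like-somersets}). For the reverse inclusion $K_A\subseteq K_I$, take a generating element $\sum_{n}a_nz_n$ of $K_A$ with $\{a_n\}\subseteq A$ bounded and $\{z_n\}\subseteq Z$ mutually orthogonal projections summing to~$1$. Using Lemma~\ref{lem:like-somersets} applied to $B=A$ and $J=I$, choose $h\in I$ with $hz\ne0$ for every non-zero projection $z\in Z$; a standard approximate-unit / functional-calculus argument then lets one approximate each $a_n$ in norm, after multiplying by a suitable central cut-off, by elements of $I$, so that $\sum_n a_n z_n$ lies in $K_I$. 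The technical point to be careful about is that the approximation must be uniform across all $n$ so that the infinite central sum converges in $K_I$; this is handled exactly as in Somerset's argument \cite[Lemma~2.5]{Somerset} and \cite[Lemma~2.3]{Somerset}, and one may also invoke that $I$ contains a strictly positive element when $A$ is separable.

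Granting $K_I=K_A$ for all $I\in\Ice A$, we conclude: Lemma~\ref{lem:why-K}(i) yields $\Mloc{K_A}=M(K_A)$, then Lemma~\ref{lem:why-K}(ii) yields $\Mlocit A{n+1}=\Mlocit An$ for all $n\ge2$; in particular $\Mlocit A3=\Mlocit A2$. The final claim that this common algebra equals $M(K_A)$ comes out of the proof of Lemma~\ref{lem:why-K}(ii), where $\Mlocit A3=M(K_A)=\Mlocit A2$ is shown directly, using that $K_A\in\Ice{\mloc}$ and $\Mloc{K_A}=M(K_A)=\Mloc{\mloc}$.

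I expect the main obstacle to be the reverse inclusion $K_A\subseteq K_I$, i.e.\ showing that elements of the form $a z$ with $a\in A$, $z\in Z$ can be approximated within $K_I$: one has to produce, for a given $\eps$, a decomposition of the unit into central projections $\{z_k\}$ together with elements $c_k\in I$ with $\|z_k a z - z_k c_k\|\le\eps$ and then assemble $\sum_k z_k c_k\in K_I$, which requires the essentiality of $I$ in the strong form supplied by Lemma~\ref{lem:like-somersets} together with a careful orthogonalisation argument exactly mirroring \cite{Somerset}. Everything else is a bookkeeping application of Lemmas~\ref{lem:why-K} and \ref{lem:mult-KA} and Proposition~\ref{prop:K-ess_ideal}.
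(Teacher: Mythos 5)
Your reduction of the theorem to the single claim $K_I=K_A$ for all $I\in\Ice A$, via Proposition~\ref{prop:K-ess_ideal} and Lemma~\ref{lem:why-K}, is exactly the paper's route, and the bookkeeping at the end (including reading off $\Mlocit A2=M(K_A)$ from the proof of Lemma~\ref{lem:why-K}(ii)) is fine. The gap is in the one step you yourself flag as the obstacle, namely the inclusion $K_A\subseteq K_I$, and your proposed mechanism for it does not work. Producing $h\in I$ with $hz\ne0$ for every non-zero projection $z\in Z$ (Lemma~\ref{lem:like-somersets}) only tells you that $zI\ne0$ for every central cut-off $z$; it does not let you approximate $za$ by elements of $zI$. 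Nor does an approximate-unit or functional-calculus argument: since $I$ is an ideal of $A$, one has $ha\in I$ automatically, but $h^{1/n}a\to a$ would force $a\in\ol{IA}=I$, which is exactly what is not available. Concretely, for $A=C([0,1])\otimes B$ with $B$ simple unital and $I=A((0,1])$, the element $z_Va$ for $a=1\otimes b$ and $V=(0,\delta)$ lies at distance $\|b\|$ from $z_VI$, so the central projections must be chosen well inside $U$ (where $I=A(U)$), and nothing in your sketch produces such a choice.

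The missing idea is precisely where the closed-points hypothesis enters this step, and your sketch never uses it. The paper's argument is: fix $a\in A$ and $\eps>0$; for $t$ in the dense $G_\delta$ set of closed separated points intersected with $U$, the quotient $A/t$ is \emph{simple} because $t$ is closed, and since $I=A(U)\nsubseteq t$ one gets $(I+t)/t=A/t$, hence an exact lift $y\in I$ with $y+t=a+t$, i.e.\ $N(a-y)(t)=0$. Continuity of the norm function at the separated point $t$ (\cite[Lemma~6.4]{AM10}) then spreads this to a neighbourhood $V\ni t$ with $N(a-y)<\eps$ on $V$, so $\|z_V(a-y)\|\leq\eps$; the maximality argument from the proof of Proposition~\ref{prop:K-ess_ideal} assembles these into $\bigl\|a-\sum_k y_kz_k\bigr\|\leq\eps$ with $y_k\in I$. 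This gives $A\subseteq K_I$, and then $K_A\subseteq K_I$ follows at once from the closure of $K_I$ under central sums with coefficients in $K_I$ (\cite[Lemma~2.5]{Somerset}, quoted before Lemma~\ref{lem:why-K}) --- so the uniformity over $n$ that you worry about is not an issue once you approximate single elements of $A$. Without replacing your Lemma~\ref{lem:like-somersets}-plus-approximate-unit step by an argument of this kind, the proof is incomplete.
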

\begin{proof}
Combining Proposition~\ref{prop:K-ess_ideal} with Lemma~\ref{lem:why-K} all we need to show is that $K_I=K_A$ for each $I\in\Ice A$.
Taking $I\in\Ice A$, the inclusion $K_I\subseteq K_A$ is evident. Let $U\subseteq\prima$ be the open dense subset such that $I=A(U)$.
Let $T\subseteq\prima$ be a dense $G_\delta$ subset consisting of closed and separated points. Fix $a\in A$ and let $\eps>0$.
For $t\in U\cap T$, $(I+t)/t=A/t$ as $t$ is a closed point. Therefore there is $y\in I$ such that $y+t=a+t$ and hence $N(a-y)(t)=0$.
The continuity of the norm function at $t$ (\cite[Lemma~6.4]{AM10}) yields an open neighbourhood $V$ of $t$ such that
$N(a-y)(s)<\eps$ for all $s\in V$. Letting $z=z_V\in Z$ be the projection corresponding to $V$ we obtain $\|z(a-y)\|\leq\eps$.
The same maximality argument as in the proof of Proposition~\ref{prop:K-ess_ideal} provides us with a family $\{z_k\}$ of
mutually orthogonal projections in $Z$ and a bounded family $\{y_k\}$ in $I$ with the property that
$\bigl\|a-\sum_ky_kz_k\bigr\|\leq\eps$. This shows that $A\subseteq K_I$ and as a result $K_A\subseteq K_I$ as claimed.
\end{proof}

It was shown in~\cite{ArgFar09}, see also \cite[Section~6]{AM08}, that the \C* $A=C[0,1]\otimes K(H)$, where $H=\ell^2$, has the property
that $\mloc\neq\Mloc\mloc$. In the following result, we explore a sufficient condition on the primitive ideal space that guarantees
this phenomenon to happen.

We shall make use of some topological concepts. Recall that a topological space $X$ is called \textit{perfect\/}
if it does not contain any isolated points. If the closure of each open subset of $X$ is open, then $X$ is said to be
\textit{extremally disconnected}. Thus, $X$ is not extremally disconnected if and only if it contains an open subset
which has non-empty boundary. It is a known fact that an extremally disconnected metric space must be discrete.

\begin{theor}\label{thm:prima-bad-enough}
Let\/ $X$ be a perfect, second countable, locally compact Hausdorff space.
Let\/ $A=C_0(X)\otimes B$ for some non-unital  separable simple \C*\/ $B$. Then\/ $\mloc\neq\Mloc\mloc$.
\end{theor}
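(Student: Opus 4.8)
The plan is to exploit the general strategy announced at the start of Section~\ref{sect:results}: to show $\mloc\ne\Mloc\mloc$ it suffices to produce a closed essential ideal $K$ of $\mloc$ together with an element $y\in M(K)\setminus\mloc$, and the natural candidate for $K$ is again $K_A$. First I would identify the relevant structure: since $B$ is simple, $\prim{A}=X$ (as topological spaces), so the hypotheses of Proposition~\ref{prop:K-ess_ideal} are \emph{not} available---$X$ is perfect, so it has no isolated points and certainly no dense $G_\delta$ of closed points in the way Somerset's theorem needs; rather, \emph{every} point of $X$ is closed, since $X$ is Hausdorff. This is exactly the opposite extreme, and it is what makes $K_A$ large: every point being closed and separated means the argument of Theorem~\ref{thm:mloc3=mloc2} shows $A\subseteq K_A$, and in fact one expects $K_A=\mloc$ here, or at least that $K_A$ is an essential ideal of $\mloc$. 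I would first pin down $\mloc$ concretely: because $B$ is simple with $M(B)$ its multiplier algebra, and $X$ is locally compact Hausdorff second countable, one has $A(U)=C_0(U)\otimes B$ for open $U\subseteq X$, so $M(A(U))=C_b^{\text{str}}(U, M(B))$ (bounded strictly continuous $M(B)$-valued functions), and $\mloc=\Dirlim{U\in\calD}C_b^{\text{str}}(U,M(B))$, i.e.\ germs at the filter of dense open sets of bounded strictly continuous $M(B)$-valued functions. The centre $Z$ consists of germs of bounded \emph{continuous scalar} functions, i.e.\ $Z=\Dirlim{U\in\calD}C_b(U)$.

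The heart of the argument is to build an element $y\in M(K_A)\setminus\mloc$. Since $X$ is a second countable metric space that is perfect but not discrete, it is \emph{not} extremally disconnected, so there is an open set $W\subseteq X$ with non-empty boundary $\partial W\ne\emptyset$; pick a boundary point $x_0\in\partial W$. Fix a norm-one positive element $e\in B$ (possible since $B\ne0$) and, more to the point, fix an increasing approximate unit, or better a strictly convergent net in $M(B)$; the key is that because $B$ is \emph{non-unital}, $M(B)\setminus B\ne\emptyset$, and in particular there is a projection, or at least a positive contraction, $p\in M(B)$ with $p\notin B$. I would then let $y$ be the germ of the function $X\supseteq W \ni x\mapsto p$, extended by $0$ on the interior of the complement---more carefully, $y = z_W\cdot\hat p$ where $\hat p$ is the constant-$p$ multiplier and $z_W\in Z$ is the central projection associated to $W$ (well-defined on the dense open set $W\,\dotcup\,\operatorname{int}(X\setminus\overline W)$). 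The point is that $y$ is \emph{not} in $\mloc$: an element of $\mloc$ is represented by a strictly continuous section defined on some dense open $U$, hence on a dense $G_\delta$ it extends to an honest continuous section into the bundle, and such a section cannot have the jump that $z_W\hat p$ has across $\partial W$ near $x_0$ while also taking the value $p\notin B$---here I would invoke the section/bundle description of $\mloc$ from the introduction and Section~\ref{sect:sheaves}, together with the fact that $\partial W$ is not meagre relative to any neighbourhood (as $X$ is Baire and $\partial W$ separates $W$ from its complement near $x_0$). On the other hand $y\in M(K_A)$: for this I would use Lemma~\ref{lem:mult-KA}, so it suffices to check $ya\in K_A$ for every $a\in A=C_0(X)\otimes B$; but $ya = z_W\cdot(pa)$ and $pa\in C_0(X)\otimes B = A\subseteq K_A$ (since $pB\subseteq B$), and $K_A$ is an ideal of $\mloc$ (which it is, once we know $K_A$ is essential---this follows as in Proposition~\ref{prop:K-ess_ideal}, or more cheaply since $A\subseteq K_A$ forces $K_A$ essential), so $z_W(pa)\in K_A$. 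Hence $y\in M(K_A)$.

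Combining, $y\in M(K_A)\setminus\mloc$, and since $K_A$ is a closed essential ideal of $\mloc$ we get $M(K_A)\subseteq \Mloc\mloc$, whence $\Mloc\mloc\supsetneq\mloc$, as desired. The step I expect to be the main obstacle is the rigorous verification that $y\notin\mloc$: one must rule out that the germ $z_W\hat p$ could be represented, on \emph{some} dense open subset possibly much smaller than $W\,\dotcup\,\operatorname{int}(X\setminus\overline W)$, by a strictly continuous $M(B)$-valued function; the resolution is that any such representative would have to agree with $p$ on a dense subset of $W$ and with $0$ on a dense subset of the complement, yet strict continuity on a dense $G_\delta$ through $x_0\in\partial W$ forces a common limiting value, contradicting $\|p\|=1$ (a quantitative lower semicontinuity argument on the norm function, in the spirit of Lemma~\ref{lem:cty-on-dense-Gdelta}, makes this precise). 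A secondary point requiring care is producing the non-trivial $p\in M(B)\setminus B$: this is exactly where non-unitality and separability of $B$ enter, via a strictly positive element whose bounded Borel functional calculus yields spectral projections outside $B$, or simply via the fact that $M(B)/B$ is a non-zero \C*.
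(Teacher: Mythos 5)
Your overall strategy (exhibit $K_A$ as an essential ideal of $\mloc$ and find an element of $M(K_A)\setminus\mloc$) matches the paper's, but the element you construct does not work, and the failure is at the heart of the matter. Your candidate is $y=z_W\hat p$ for a single open set $W$ with non-empty boundary and a constant multiplier $\hat p$. This element already lies in $\mloc$: the constant function $\hat p$ belongs to $M(A)=C_b(X,M(B)_\beta)$ and $z_W\in Z(\mloc)$, so $z_W\hat p\in\mloc$; concretely, $z_W\hat p$ is represented by the locally constant (hence strictly continuous) function equal to $p$ on $W$ and to $0$ on $\operatorname{int}(X\setminus\ol W)$, defined on the \emph{dense open} set $W\cup\operatorname{int}(X\setminus\ol W)$, which simply omits $\partial W$. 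Your claim that ``strict continuity on a dense $G_\delta$ through $x_0\in\partial W$ forces a common limiting value'' is exactly what fails: a representative of an element of $\mloc$ need only live on some dense open subset, and nothing forces that subset to see both sides of the jump at $x_0$ --- indeed the central projection $z_W$ itself ``jumps'' across $\partial W$, which is precisely why $Z(\mloc)$ is large. Taking $p\in M(B)\setminus B$ does not help, since $\hat p\in M(A)\subseteq\mloc$ regardless.

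The missing idea is a diagonalisation over a countable basis. The paper fixes a countable basis $\{V_n'\}$ of $X$, chooses for each $n$ an open $V_n$ with $\ol{V_n}\subseteq V_n'$ and $\ol{V_n}$ not open (possible because $X$ is perfect and metrisable, so no open subset is extremally disconnected), and forms the \emph{infinite} sum $q=\sum_n z_np_{2n}$, where the $p_{2n}$ are mutually orthogonal positive norm-one elements of $B$ coming from a strictly increasing approximate identity --- this is where non-unitality of $B$ enters. Now \emph{every} dense open $U$ contains some $V_n'$ and hence contains the jump locus $\partial\ol{V_n}$ of the $n$-th summand in its interior; compressing by $p_{2n}^{1/2}$ isolates that summand and contradicts continuity on $U$. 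This is what defeats the adversary's freedom to shrink $U$. A second non-trivial step you would still owe is $q\in M(K_A)$: since $q$ is only defined as a supremum in $I(A)$, the paper must show that the partial sums form a strict Cauchy sequence in $M(\closa)$ and identify the strict limit with $q$; for your $y$ this step is vacuous only because $y$ is already in $\mloc$, which is a symptom of the construction being too weak. Finally, a small correction: the hypotheses of Proposition~\ref{prop:K-ess_ideal} \emph{are} available here --- since $X$ is Hausdorff, every point of $\prima=X$ is closed and separated, so $X$ itself is a dense $G_\delta$ of closed points --- and this is exactly how the paper begins; note also that essentiality of $K_A$ is the easy part, the real content of that proposition being that $K_A$ is an \emph{ideal} of $\mloc$ at all.
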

\begin{proof}
Since every point in $\prima=X$ is closed and separated, $K_A$ is an essential ideal in $\mloc$, by Proposition~\ref{prop:K-ess_ideal}.
By Theorem~\ref{thm:mloc3=mloc2}, $\Mloc\mloc=M(K_A)$. To prove the statement of the theorem it thus suffices to find an element
in $M(K_A)$ not contained in~$\mloc$.

Note that every non-empty open subset $O\subseteq X$ contains an open subset which has
non-empty boundary. This follows from the above-mentioned fact and the assumption that $O$ is second countable,
locally compact Hausdorff and hence metrisable. Therefore, if $O$ were extremally disconnected, it had to be discrete
in contradiction to the hypothesis that $X$ is perfect.

Let $\{V_n'\mid n\in\NN\}$ be a countable basis for the topology of~$X$.
For each $n\in\NN$, choose an open subset $V_n$ of $X$ such that $\ol{V_n}\subseteq V_n'$ and $\ol{V_n}$ is not open.
Put $W_n=X\setminus\ol{V_{n}}$. Then $O_n=V_{n}\cup W_n$ is a dense open subset of~$X$.

Let $z_n$ denote the equivalence class of $\chi^{}_{V_{n}}\otimes1\in C_b(O_n,M(B)_\beta)=M(C_0(O_n)\otimes B)$ in~$Z$.
Let $(e_n)_{n\in\NN}$ be a strictly increasing approximate identity of $B$ with the properties
$e_ne_{n+1}=e_n$ and $\|e_{n+1}-e_n\|=1$ for all~$n$; see \cite[Lemma~1.2.3]{Loring}, e.g.
Put $p_1=e_1$, $p_n=e_n-e_{n-1}$ for $n\geq2$. Then $(p_{2n})_{n\in\NN}$ is a sequence of mutually orthogonal positive norm-one
elements in~$B$.
Set $q_n=\sum_{j=1}^n z_jp_{2j}$, $n\in\NN$, where we identify an element $b\in M(B)$ canonically with the constant function
in $M(A)=C_b(X,M(B)_\beta)$. By means of this we obtain an increasing sequence $(q_n)_{n\in\NN}$ of positive elements in $\mloc$
bounded by~$1$.
Since the injective envelope is monotone complete~\cite{Ham79}, the supremum of this sequence exists in $I(A)$ and is a positive element of norm~$1$,
which we will write as $q=\sup_n q_n=\sum_{n=1}^\infty z_np_{2n}$.

Suppose that $q\in\mloc$. Then, for given $0<\eps<1/2$, there are a dense open subset $U\subseteq X$ and
$m\in C_b(U,M(B)_\beta)_+$ with $\|m\|\leq1$ such that $\|m-q\|<\eps$.
Upon multiplying both on the left and on the right by~$p_{2n}^{\nicefrac12}$ we find that
\[
\sup_{t\in U\cap O_n}\!\!\bigl\|p_{2n}^{\nicefrac12}m(t)p_{2n}^{\nicefrac12}-\chi_{V_n}(t)p_{2n}^2\bigr\|
         =\bigl\|p_{2n}^{\nicefrac12}mp_{2n}^{\nicefrac12}-z_np_{2n}^2\bigr\|<\eps.
\]
Let $n\in\NN$ be such that $V_n'\subseteq U$.
Define $f_n\in C_b(U)$ by $f_n(t)=\|p_{2n}^{\nicefrac12}m(t)p_{2n}^{\nicefrac12}\|$, $t\in U$
(note that $p_{2n}^{\nicefrac12}mp_{2n}^{\nicefrac12}\in C_b(U,B)$). Then $0\leq f_{n}\leq1$ and
\begin{equation*}
\begin{split}
\bigl|f_n(t)-\chi_{V_n}(t)\bigr|
  &= \bigl| \|p_{2n}^{\nicefrac12}m(t)p_{2n}^{\nicefrac12}\| - \chi_{V_n}(t)\|p_{2n}^2\| \bigr|\\
  &\leq \bigl\|p_{2n}^{\nicefrac12}m(t)p_{2n}^{\nicefrac12}-\chi_{V_n}(t)p_{2n}^2\bigr\|<\eps\\
\end{split}
\end{equation*}
for all $t\in U\cap O_n$.
By construction, $\ol{V_n}$ is not open; hence $\partial \ol{V_n}\ne\emptyset$. Each $t_0\in\partial \ol{V_n}$ also
belongs to $\ol{W_n\cap V_n'}$ as $\partial\ol{V_n}=\partial W_n$ and
hence $t_0\in \ol{W_n}\cap V_n'\subseteq\ol{W_n\cap V_n'}$ since $V_n'$ is open.
For every $t\in V_n$, $|f_{n}(t)-1|<\eps$ and hence $f_{n}(t)\geq1-\eps>1/2$ for all $t\in\ol{V_n}$, by continuity of~$f_{n}$.
In particular, $f_{n}(t_0)>1/2$.
For every $t\in W_n\cap V_n'$, we have $f_{n}(t)<\eps<1/2$ and thus $f_{n}(t_0)\leq\eps<1/2$.
This contradiction shows that $q\notin\mloc$.

\smallskip
In order to prove that $q$ belongs to $M(K_A)$  it suffices to show that $qa\in K_A$ for every $a\in A$, by Lemma~\ref{lem:mult-KA}.
For each $n\in\NN$ and $a\in A$, $q_na\in\closa$ since \hbox{$z_jp_{2j}a\in ZA$}. Therefore,
$q_n\in M(\closa)$ for each~$n$. Note that $\closa$ contains a strictly positive element~$h$.
Indeed, taking an increasing approximate identity $(g_n)_{n\in\NN}$ of $C_0(X)$ we obtain an increasing approximate
identity $u_n=g_n\otimes e_n$, $n\in\NN$ of~$A$. It follows easily that $(u_n)_{n\in\NN}$ is an approximate identity for
$\closa=\ol{AZ} $.  It is well-known that $h=\sum_{n=1}^\infty 2^{-n}u_n$ is then a strictly positive element.

As a result, in order to prove that $(q_n)_{n\in\NN}$ is a Cauchy sequence in $M(\closa)_\beta$,
we only need to show that $(q_nh)_{n\in\NN}$ is a Cauchy sequence. For $k\in\NN$, $p_{2j}e_k=(e_{2j}-e_{2j-1})e_k=0$
if $2j>k+1$. Consequently,
\[
z_jp_{2j}h=\sum_{k=1}^\infty2^{-k}z_jp_{2j}u_k=\sum_{k=1}^\infty2^{-k}g_kz_jp_{2j}e_k\qquad(j\in\NN)
\]
yields that, for each $n\in\NN$,
\begin{equation*}
\begin{split}
q_nh &=\sum_{j=1}^n\sum_{k=1}^\infty2^{-k}g_kz_jp_{2j}e_k\\
            &=\sum_{k=1}^\infty2^{-k}g_kz_1p_2e_k + \sum_{k=3}^\infty2^{-k}g_kz_2p_4e_k+\ldots+\!\sum_{k=2n-1}^\infty2^{-k}g_kz_np_{2n}e_k.
\end{split}
\end{equation*}
We conclude that, for $m>n$,
\[
\|(q_m-q_n)h\|=\Bigl\|\sum_{j=n+1}^m\sum_{k=2j-1}^\infty2^{-k} g_kz_jp_{2j}e_k\Bigr\|
                              =\max_{n+1\leq j\leq m}\,\Bigl\|\sum_{k=2j-1}^\infty2^{-k} g_kz_jp_{2j}e_k\Bigr\|\leq\!\sum_{k=2n+1}^\infty2^{-k}
\]
since $g_kz_jp_{2j}e_k\,g_\ell z_ip_{2i}e_\ell=0$ for all $k,\ell$ whenever $i\ne j$;
therefore $\|(q_m-q_n)h\|\to0$ as $n\to\infty$. This proves that $(q_n)_{n\in\NN}$ is a strict Cauchy sequence in~$M(\closa)$.
Let $\tilde q\in M(\closa)$ denotes its limit, which is a positive element of norm at most one since $M(\closa)_+$ is closed
in the strict topology. In order to show that $\tilde q=q$ note at first that $I(M(\closa))=I(\closa)=I(A)$
by \cite[Proposition~2.14]{AM08}. The mutual orthogonality of the $p_{2n}\!$'s yields
$qq_n=q_mq_n$ for all $m\geq n$. Thus, for all $a\in\closa$, $aqq_n=aq_mq_n$ which implies that $aqq_n=a\tilde qq_n$
for all~$a$. As $A$ is essential in $I(A)$, it follows that $qq_n=\tilde qq_n$ for all $n\in\NN$ by \cite[Theorem~3.4]{AM08}.
Repeating the same argument using the strict convergence of $(q_n)_{n\in\NN}$ we obtain that $q\tilde q={\tilde q}^{\,2}$.

For all $1\leq n\leq m$, $q_n\leq q_m$ and hence $a^*q_na\leq a^*q_ma$ for every $a\in\closa$. It follows that, for all~$n$,
$a^*q_na\leq a^*\tilde qa$ for every~$a$ and therefore $q_n\leq\tilde q$ for all $n\in\NN$. Consequently, $q\leq\tilde q$.
Together with the above identity $(\tilde q-q)\tilde q=0$ this entails that $q=\tilde q\in M(\closa)$.

Finally, for each $a\in A$, we have $qa\in\closa\subseteq K_A$. This completes the proof.
\end{proof}
\begin{remark}\label{rem:stonean}
A space $X$ as in Theorem~\ref{thm:prima-bad-enough} is perfect if and only if it contains a dense $G_\delta$ subset with empty interior.
In \cite[Theorem~6.13]{AM08}, the existence of a dense $G_\delta$ subset with empty interior in the primitive spectrum, which was
assumed to be Stonean, was used to obtain a \C* $A$ such that $\mloc$ is a proper subalgebra of~$I(A)$ and the latter agreed
with $\Mloc\mloc$. In contrast to this example, and also the one considered in~\cite{ArgFar09}, our approach in
Theorem~\ref{thm:prima-bad-enough} does not need any additional information on the injective envelope; nevertheless all
higher local multiplier algebras coincide by Theorem~\ref{thm:mloc3=mloc2}.
\end{remark}
\begin{remark}\label{rem:dichotomy}
Taking the two results Corollary~\ref{cor:central-mloc} and Theorem~\ref{thm:prima-bad-enough} together we obtain the following,
maybe surprising dichotomy for a compact Hausdorff space $X$ satisfying the assumptions in $\eqref{thm:prima-bad-enough}$.
Let $A=C(X)\otimes B$ for a unital, separable, simple \C*~$B$. Then $\mloc=\Mloc\mloc$.
But if we stabilise $A$ to $A_s=A\otimes K(H)$ then $\Mloc{A_s}\neq\Mloc{\Mloc{A_s}}$!
\end{remark}

With a little more effort we can replace the commutative \C* in Theorem~\ref{thm:prima-bad-enough} by a nuclear one, provided the properties
of the primitive ideal space are preserved. We shall formulate this as a necessary condition on a \C* $A$ with tensor product structure to
enjoy the property $\Mloc\mloc=\mloc$. Note that, whenever $B$ and $C$ are separable \C*s and at least one of them is nuclear,
the primitive ideal space $\prim{C\otimes B}$ is homeomorphic to $\prim C\times\prim B$, by \cite[Theorem~B.45]{RaeWil}, for example.

Some elementary observations are collected in the next lemma in order not to obscure the proof of the main result.
\begin{lem}\label{lem:elem-topo}
Let\/ $X$ be a topological space, and let\/ $G\subseteq X$ be a dense  subset consisting of closed points.
\begin{enumerate}[\rm(i)]
\item If\/ $X$ is perfect then\/ $G$ is perfect (in itself).
\item For each\/ $V\subseteq X$ open, $\ol V\cap G=\Ol{G}{V\cap G}$, where
            $\Ol{G}{\phantom{x}}$ denotes the closure relative to~$G$.
\item For each\/ $V\subseteq X$ open, $\partial\bigl(\Ol{G}{V\cap G}\bigr)=\partial\ol V\cap G$.
\end{enumerate}
\end{lem}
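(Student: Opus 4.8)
The plan is to verify the three assertions in order, each being an elementary point-set topology fact about a dense set $G$ of closed points; the only real content is keeping the relative topology straight. For (i), suppose $g \in G$ were isolated in $G$; then there is an open $V \subseteq X$ with $V \cap G = \{g\}$. Since $G$ is dense and $\{g\}$ is closed (as $g$ is a closed point of $X$), the open set $V \setminus \{g\}$ must be empty, for otherwise it would be a non-empty open set meeting $G$ in the empty set. Hence $V = \{g\}$, so $g$ is isolated in $X$, contradicting perfectness. Thus $G$ has no isolated points.

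For (ii), the inclusion $\Ol{G}{V\cap G} \subseteq \ol V \cap G$ is immediate since $\ol V \cap G$ is a relatively closed subset of $G$ containing $V \cap G$. Conversely, take $t \in \ol V \cap G$ and let $N$ be any relatively open neighbourhood of $t$ in $G$, say $N = O \cap G$ with $O \subseteq X$ open and $t \in O$. Then $O \cap V$ is a non-empty open subset of $X$ (non-empty because $t \in \ol V$), hence meets the dense set $G$; so $N \cap (V \cap G) = O \cap V \cap G \neq \emptyset$. This shows $t \in \Ol{G}{V \cap G}$, giving the reverse inclusion.

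For (iii), I would compute the relative boundary directly. Since $V \cap G$ is relatively open in $G$, we have $\partial\bigl(\Ol{G}{V\cap G}\bigr) = \Ol{G}{V\cap G} \setminus (V \cap G)$, and by (ii) this equals $(\ol V \cap G) \setminus V = (\ol V \setminus V) \cap G = \partial \ol V \cap G$, where the last equality uses that $\ol V \setminus V = \ol V \setminus \mathrm{int}(\ol V)$ need not hold in general but $\partial \ol V = \ol{(\ol V)} \setminus \mathrm{int}(\ol V) = \ol V \setminus \mathrm{int}(\ol V)$, and since $V$ is open $V \subseteq \mathrm{int}(\ol V)$, while conversely $\mathrm{int}(\ol V) \setminus V$ consists of points whose every neighbourhood meets $V$ (being in $\ol V$) and also meets $X \setminus \ol V$ or lies outside $V$; a cleaner route is to note $\partial \ol V = \ol V \setminus \mathrm{int}(\ol V) \subseteq \ol V \setminus V$ always, and for the opposite, a point of $\ol V \setminus V$ that lay in $\mathrm{int}(\ol V)$ would have an open neighbourhood inside $\ol V$, which together with density of $G$ forces... — to avoid this detour I would instead simply observe that for an open set $V$ one has the standard identity $\partial \ol V = \ol V \setminus V$ fails only when $V$ is not regular open, so I will actually argue (iii) as: $\partial\bigl(\Ol{G}{V\cap G}\bigr) = \Ol{G}{V\cap G} \cap \Ol{G}{G \setminus (V\cap G)}$; the first factor is $\ol V \cap G$ by (ii), and the second is $\Ol{G}{(X\setminus V)\cap G} = \ol{X \setminus V} \cap G$ again by (ii) applied to the open set $X \setminus \ol V$ after checking $(X \setminus V) \cap G$ and $(X \setminus \ol V) \cap G$ have the same relative closure (their symmetric difference lies in $\partial \ol V$, which meets $G$ densely in itself but contains no relatively open subset of $G$), so the intersection is $\ol V \cap \ol{X\setminus V} \cap G = \partial \ol V \cap G$.

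The main obstacle is the bookkeeping in (iii): one must be careful that $\Ol{G}{V \cap G}$ computed \emph{inside} $G$ genuinely tracks $\ol V$ computed in $X$, and that taking boundaries commutes with intersecting with $G$ — this works precisely because $G$ is dense, so no non-empty open subset of $X$ is invisible to $G$, and because the points of $\partial \ol V$ that lie in $G$ are exactly the points where a relatively open neighbourhood in $G$ fails to be contained in either $V \cap G$ or its complement. I would present (iii) via the symmetric formula $\partial E = \Ol{G}{E} \cap \Ol{G}{G \setminus E}$ with $E = V \cap G$, reducing everything to two applications of part (ii), which is the efficient packaging.
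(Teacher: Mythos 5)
Parts (i) and (ii) of your proposal are correct and are essentially the paper's own (terser) argument. The problem is in (iii). The packaging you settle on rests on the identity $\partial\bigl(\Ol{G}{V\cap G}\bigr)=\Ol{G}{V\cap G}\cap\Ol{G}{G\setminus(V\cap G)}$, i.e.\ you apply the boundary formula $\partial E=\Ol{G}{E}\cap\Ol{G}{G\setminus E}$ with $E=V\cap G$ rather than with $E=\Ol{G}{V\cap G}$. That computes the relative boundary of $V\cap G$ itself, not of its closure, and in general one only has $\partial(\Ol{G}{E})\subseteq\partial E$, possibly strictly. A concrete instance satisfying all hypotheses of the lemma: $X=\RR$, $G=\QQ$, $V=(0,1)\cup(1,2)$. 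Here the left-hand side of (iii) is $\{0,2\}$, but your formula gives $([0,2]\cap\QQ)\cap\bigl((-\infty,0]\cup\{1\}\cup[2,\infty)\bigr)=\{0,1,2\}$. Your first attempt, $\partial\bigl(\Ol{G}{V\cap G}\bigr)=\Ol{G}{V\cap G}\setminus(V\cap G)$, fails for the same reason (you noticed this yourself). The patch you then invoke --- that $(X\setminus V)\cap G$ and $(X\setminus\ol V)\cap G$ have the same relative closure --- is also false: in the example the point $1$ lies in the closure of the first set but not of the second, and their difference $(\ol V\setminus V)\cap G$ is not contained in $\partial\ol V$. The two errors point in opposite directions and happen to cancel, so you land on the stated conclusion; but the final equality $\ol V\cap\ol{X\setminus V}\cap G=\partial\ol V\cap G$ as written is again $\partial V\cap G$, not $\partial\ol V\cap G$.

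The repair is exactly the paper's route. Apply the boundary formula to the relatively \emph{closed} set $E=\Ol{G}{V\cap G}$, so that $\partial E=E\cap\Ol{G}{G\setminus E}$. By (ii), $E=\ol V\cap G$, hence $G\setminus E=G\cap(X\setminus\ol V)$ is the trace on $G$ of the \emph{open} set $W=X\setminus\ol V$; a second application of (ii) gives $\Ol{G}{G\setminus E}=\ol W\cap G$, and therefore $\partial E=\ol V\cap\ol{X\setminus\ol V}\cap G=\partial\ol V\cap G$. No comparison between the closures of $(X\setminus V)\cap G$ and $(X\setminus\ol V)\cap G$ is needed, and the only input is part (ii) applied twice, to $V$ and to $W$.
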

\begin{proof}
Assertion (i) is immediate from the density of $G$ and the hypothesis that $X\setminus\{t\}$ is open for each $t\in G$.
In~(ii), the inclusion ``$\supseteq$'' is evident. The other inclusion ``$\subseteq$'' follows from the density of~$G$.

To verify (iii), we conclude from (ii) that
\begin{equation*}
G\setminus\Ol{G}{V\cap G} =G\setminus(\ol V\cap G)=G\cap(X\setminus\ol V)
\end{equation*}
and therefore, with $W=X\setminus\ol V$,
\begin{equation*}
\Ol{G}{G\setminus\Ol{G}{V\cap G}}=\Ol{G}{G\cap W}=G\cap\ol W,
\end{equation*}
where we used~(ii) another time. This entails
\begin{equation*}
\partial\bigl(\Ol{G}{V\cap G}\bigr)=\Ol{G}{V\cap G} \cap \Ol{G}{G\setminus\Ol{G}{V\cap G}}=\ol V\cap G\cap\ol{X\setminus\ol V}=\partial\ol V\cap G
\end{equation*}
as claimed.
\end{proof}

\begin{theor}\label{thm:nec-condition}
Let\/ $B$ and\/ $C$ be separable \C*s and suppose that at least one of them is nuclear. Suppose further that\/ $B$ is simple and non-unital
and that\/ $\prim C$ contains a dense $G_\delta$ subset consisting of closed points.
Let\/ $A=C\otimes B$. If\/ $\mloc=\Mloc\mloc$ then\/ $\prim C$ contains an isolated point.
\end{theor}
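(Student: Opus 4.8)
The plan is to prove the contrapositive: assuming $\prim C$ has no isolated point (so $\prim C$ is perfect), we shall produce an element of $\Mloc\mloc\setminus\mloc$. Since one of $B,C$ is nuclear and $B$ is simple, \cite[Theorem~B.45]{RaeWil} identifies $\prim A$ homeomorphically with $\prim C\times\prim B=\prim C$; hence $\prim A$ is second countable (as $C$ is separable), perfect, and carries a dense $G_\delta$ subset $G$ of closed points. Because $A$ is separable, Proposition~\ref{prop:K-ess_ideal} gives $K_A\in\Ice\mloc$ and Theorem~\ref{thm:mloc3=mloc2} gives $\Mloc\mloc=M(K_A)$, so it suffices to exhibit $q\in M(K_A)$ with $q\notin\mloc$.

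All the geometry is carried out in the ``good'' subspace $T:=\sepa\cap G$. As $A$ is separable, $\sepa$ and hence $T$ are dense $G_\delta$ subsets of the Baire space $\prim A$, and $T$, being a $G_\delta$ subset of the Polish space $\sepa$, is itself Polish; by Lemma~\ref{lem:elem-topo}(i) it is perfect, so $T$ and each non-empty relatively open subset of $T$ is non-discrete and metrisable, hence not extremally disconnected. Using regularity of $T$ one finds, inside any non-empty relatively open $W\subseteq T$, a relatively open $\Omega$ with $\Ol T{\Omega}\subseteq W$ and $\Ol T{\Omega}$ not open in $T$. Fix a countable basis $\{V_n'\}$ of $\prim A$; for each $n$ with $V_n'\cap T\ne\emptyset$ apply this to $W=V_n'\cap T$ and choose an open $V_n\subseteq\prim A$ with $V_n\cap T=\Omega$, so that by Lemma~\ref{lem:elem-topo}(ii),(iii), $\overline{V_n}\cap T=\Ol T{V_n\cap T}\subseteq V_n'$ while $\partial\overline{V_n}\cap T=\partial\bigl(\Ol T{V_n\cap T}\bigr)\ne\emptyset$. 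Put $W_n=\prim A\setminus\overline{V_n}$; then $O_n:=V_n\cup W_n$ is dense open, $V_n$ is clopen in $O_n$, and the splitting $A(O_n)=A(V_n)\oplus A(W_n)$ yields a central projection $z_n\in M(A(O_n))\subseteq\mloc$. With the approximate identity $(e_k)$ of $B$ and the mutually orthogonal norm-one positives $p_{2k}$ as in Theorem~\ref{thm:prima-bad-enough}, set $q_n=\sum_{j=1}^n z_jp_{2j}$ and $q=\sup_n q_n=\sum_n z_np_{2n}\in I(A)$. That $q\in M(K_A)$ follows exactly as in the second part of the proof of Theorem~\ref{thm:prima-bad-enough}: $\closa$ is $\sigma$-unital, $(q_n)$ is strict-Cauchy in $M(\closa)$ with limit $q$, so $qa\in\closa\subseteq K_A$ for all $a\in A$ (the inclusion $\closa\subseteq K_A$ holding because $K_A$ is an ideal of $\mloc$), whence $q\in M(K_A)$ by Lemma~\ref{lem:mult-KA}.

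To see $q\notin\mloc$, suppose otherwise and fix $0<\eps<\tfrac12$, a dense open $U\subseteq\prim A$ and $m\in M(A(U))_+$ with $\|m\|\le1$, $\|q-m\|<\eps$; pick $n$ with $V_n'\subseteq U$ and $V_n'\cap T\ne\emptyset$. Compression by $p_{2n}^{\nicefrac12}$ gives $\|p_{2n}^{\nicefrac12}mp_{2n}^{\nicefrac12}-z_np_{2n}^2\|<\eps$, and restricting to $M(A(U\cap O_n))=M(A(U\cap V_n))\oplus M(A(U\cap W_n))$, in which $z_np_{2n}^2$ has second component $0$, yields $\|p_{2n}^{\nicefrac12}mp_{2n}^{\nicefrac12}|_{U\cap W_n}\|<\eps$ and $\|(p_{2n}^{\nicefrac12}mp_{2n}^{\nicefrac12}-1\otimes p_{2n}^2)|_{U\cap V_n}\|<\eps$. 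Now choose $t_0\in\partial\overline{V_n}\cap T$: then $t_0\in U$ (as $\overline{V_n}\cap T\subseteq V_n'$), $t_0$ is a closed point so $A/t_0\cong(C/c_{t_0})\otimes B$ is simple and $m$ has an evaluation $m(t_0)\in M(A/t_0)$, and $t_0$ lies in the (sequential) $T$-closures of both $U\cap V_n\cap T$ and $U\cap W_n\cap T$. Since $T$ is metrisable and, for any $x\in M(A(U))$, the function $s\mapsto\|x(s)\|$ is lower semicontinuous on $U$ (approximating $x^*x$ by $u_\lambda x^*xu_\lambda\in A(U)$), testing along sequences in $U\cap W_n\cap T$ gives $\|p_{2n}^{\nicefrac12}m(t_0)p_{2n}^{\nicefrac12}\|\le\eps$, while testing $x=p_{2n}^{\nicefrac12}mp_{2n}^{\nicefrac12}-1\otimes p_{2n}^2$ along sequences in $U\cap V_n\cap T$ gives $\|p_{2n}^{\nicefrac12}m(t_0)p_{2n}^{\nicefrac12}-p_{2n}^2\|\le\eps$. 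Adding, $1=\|p_{2n}^2\|\le2\eps$, a contradiction; hence $q\notin\mloc$.

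The principal obstacle is this last step. Since $\prim A$ is in general neither Hausdorff nor first countable, the commutative argument of Theorem~\ref{thm:prima-bad-enough}---where the compressed multiplier is an honestly norm-continuous $B$-valued function on all of $U$ and one simply evaluates at one boundary point---breaks down, and, worse, the boundary $\partial\overline{V_n}\cap T$ is nowhere dense, so it need not meet any dense $G_\delta$ on which a given norm function becomes continuous. This is circumvented by (a) performing the construction of the ``bad'' sets $V_n$ inside the Polish space $T$ and transporting it back to $\prim A$ through Lemma~\ref{lem:elem-topo}, and (b) replacing continuity of a norm function at $t_0$ by two applications of lower semicontinuity, one from the $W_n$ side and one from the $V_n$ side, which together still force $\|p_{2n}^2\|\le2\eps$. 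One must also check the routine tensorial facts that $A(O_n)$ and the quotients $A/t_0$ split as stated and that $\|1\otimes b\|_{M(D\otimes B)}=\|b\|$ for $D\ne0$, all of which use the simplicity of $B$.
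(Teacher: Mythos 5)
Your proof is correct, and it shares the paper's overall architecture: you argue the contrapositive, build the same element $q=\sum_{n}z_np_{2n}$ from sets $V_n,W_n$ manufactured inside the Polish space $T$ of closed separated points via Lemma~\ref{lem:elem-topo}, reduce to exhibiting $q\in M(K_A)\setminus\mloc$ through Proposition~\ref{prop:K-ess_ideal}, Theorem~\ref{thm:mloc3=mloc2} and Lemma~\ref{lem:mult-KA}, and transfer the $M(K_A)$-membership argument verbatim from Theorem~\ref{thm:prima-bad-enough}. The genuine divergence is at the boundary point $t_0\in\partial\ol{V_n}\cap T$. The paper first localises in the $C$-variable: it chooses $c\in C(U)_+$ with $\|c\|=\|c+t_0\|=1$, compresses $m$ by $a=c^{1/2}\otimes p_{2n}^{1/2}\in A(U)$ so that $ama$ becomes an element of $A$, and then invokes the \emph{continuity} of the norm function $t\mapsto\|ama+t\|$ at the separated point $t_0$ (\cite[Lemma~6.4]{AM10}) to compare the $V_n$- and $W_n$-sides. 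You instead keep the multiplier $x=p_{2n}^{1/2}mp_{2n}^{1/2}\in M(A(U))$ and exploit the fact that \emph{lower semicontinuity} of a norm function already yields an upper bound at any point lying in the closure of a set on which the function is small; applied once to $x$ along $U\cap W_n\cap T$ and once to $x-1\otimes p_{2n}^2$ along $U\cap V_n\cap T$, this forces $1\leq2\eps$. Your variant dispenses with \cite[Lemma~6.4]{AM10} and with the auxiliary element $c$ altogether (and lets you work with $\eps<1/2$ rather than $\eps<1/4$), at the cost of the routine verifications you correctly flag: lower semicontinuity of norm functions of multipliers, compatibility of the evaluations under $M(A(U))\to M(A(U\cap V_n))\oplus M(A(U\cap W_n))$, and the identity $\|1\otimes p_{2n}^2\|=1$ in the relevant primitive quotient, which uses the simplicity of $B$ and of $A/t_0$. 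Both routes are sound; yours is the more elementary of the two at this step, while the paper's stays closer to its guiding principle that norm functions of elements of $A$ are continuous at separated points.
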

\begin{proof}
Let $X=\prim C=\prima$. We shall assume that $X$  is perfect
and conclude from this that $\Mloc\mloc\neq\mloc$. By Proposition~\ref{prop:K-ess_ideal}, $K_A$ is an essential ideal in $\mloc$.
Using Theorem~\ref{thm:mloc3=mloc2} we are left with the task to find an element in $M(K_A)\setminus\mloc$.

The hypothesis on $X$ combined with the separability assumption yields a dense $G_\delta$ subset $S\subseteq X$
consisting of closed separated points which is a Polish space. By Lemma~\ref{lem:elem-topo} (i), $S$ is a perfect metrisable space and therefore
cannot be extremally disconnected, as mentioned before. Since a non-empty open subset of a perfect space is clearly perfect, it follows that
every non-empty  open subset of $S$ contains an open subset which has non-empty boundary.

Let $\{V_n'\mid n\in\NN\}$ be a countable basis for the topology of~$X$.
For each $n\in\NN$, choose an open subset $V_n$ of $X$ such that $\Ol{S}{V_n\cap S}\subseteq V_n'\cap S$
and $\Ol{S}{V_n\cap S}$ is not open.
By Lemma~\ref{lem:elem-topo}~(ii), $\Ol{S}{V_n\cap S}=\ol{V_n}\cap S$ and we shall use the latter, simpler notation.
Put $W_n=X\setminus\ol{V_{n}}$. Then $O_n=V_{n}\cup W_n$ is a dense open subset of~$X$.

Using the same notation as in the fourth paragraph of the proof of Theorem~\ref{thm:prima-bad-enough} we define the element $q\in I(A)$
by $q=\sum_{n=1}^\infty z_n\otimes p_{2n}$. The argument showing that $q\in M(K_A)$ takes over verbatim from
the proof of Theorem~\ref{thm:prima-bad-enough}.  We will now modify the argument in the fifth paragraph of that proof.

Suppose that $q\in\mloc$. For $0<\eps<1/4$, there are a dense open subset $U\subseteq X$ and an element $m\in M(A(U))_+$ with
$\|m\|\leq1$ such that $\|m-q\|<\eps$.
Let $n\in\NN$ be such that $V_n'\subseteq U$ and choose $t_0\in\partial\ol{V_n}\cap S\subseteq U\cap S$ using
Lemma~\ref{lem:elem-topo}~(iii). Since the ideal $C(U)$ of $C$ corresponding to $U$ is not contained in~$t_0$,
there is $c\in C(U)_+$ with $\|c\|=1$ and $\|c+t_0\|=1$. As the function $t\mapsto\|c+t\|$ is lower semicontinuous, there is an open
subset $V\subseteq U$ containing $t_0$ such that $\|c+t\|>1-\eps$ for $t\in V$. Let
$a=c^{\nicefrac12}\otimes{p_{2n}^{\nicefrac12}}\in C(U)\otimes B=A(U)$ and put $f(t)=\|ama+t\|$, $t\in U$. By \cite[Lemma~6.4]{AM10},
$f$ is continuous on $U\cap S$ because $ama\in A$. For each $t\in V\cap O_n$ we have
\begin{equation*}
\begin{split}
\bigl|f(t)-\chi_{V_n}(t)\bigr| &\leq \bigl|\|ama+t\|-\chi_{V_n}(t)\,\|c+t\|\bigr| + \bigl|\chi_{V_n}(t)\,\|c+t\|-\chi_{V_n}(t)\bigr|\\
                                                         &\leq \bigl\|ama+t-\chi_{V_n}(t)c\otimes p_{2n}^2+t\bigr\| + (1-\|c+t\|)\,\chi_{V_n}(t)\\
                                                         &\leq \bigl\|(c^{\nicefrac12}\otimes{p_{2n}^{\nicefrac12}})\,m\,(c^{\nicefrac12}\otimes{p_{2n}^{\nicefrac12}})
                                                                                 -cz_n\otimes p_{2n}^2\bigr\| + \eps\\
                                                         &\leq\|m-q\|+\eps<2\,\eps,
\end{split}
\end{equation*}
since $(c^{\nicefrac12}\otimes{p_{2n}^{\nicefrac12}})\,q\,(c^{\nicefrac12}\otimes{p_{2n}^{\nicefrac12}})=cz_n\otimes p_{2n}^2$.
For each $t\in V_n\cap S$ we have $f(t)>1-2\eps>1/2$ and therefore $f(t_0)>1/2$ by continuity
of $f$ on $U\cap S$ and the fact that $\Ol{S}{V_n\cap S}=\ol{V_n}\cap S$ by Lemma~\ref{lem:elem-topo}~(ii),
thus $t_0\in\Ol{S}{V\cap V_n\cap S}$. On the other hand,
\[
t_0\in\ol{W_n}\cap V\cap S\subseteq\ol{W_n\cap V}\cap S =\Ol{S}{W_n\cap V\cap S}
\]
as $V$ is open and using Lemma~\ref{lem:elem-topo}~(ii) again. Thus $f_{n}(t_0)\leq2\,\eps<1/2$.
This contradiction shows that $q\notin\mloc$, and the proof is complete.
\end{proof}

We can now formulate an if-and-only-if condition characterising when the second local multiplier algebra is equal to the first.

\begin{corol}\label{cor:if-and-only-if}
Let\/ $A=C\otimes B$ for two separable \C*s\/ $B$ and\/ $C$ satisfying the conditions of Theorem~\ref{thm:nec-condition}.
Suppose that\/ $\prima$ contains a dense $G_\delta$ subset consisting of closed points.
Then\/ $\mloc=\Mloc\mloc$ if and only if\/ $\prima$ contains a dense subset of isolated points.
\end{corol}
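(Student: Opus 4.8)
The plan is to derive Corollary~\ref{cor:if-and-only-if} by combining the two preceding results with a reduction argument that handles the ``dense set of isolated points'' hypothesis by decomposing $\prima$ into an open discrete part and a perfect part. First I would observe that the necessity of some isolated point is already contained in Theorem~\ref{thm:nec-condition}: if $\prima$ contains a dense subset of isolated points then, as we are about to verify, $\mloc=\Mloc\mloc$; conversely, if $\prima$ contains \emph{no} isolated point then $X=\prim C$ is perfect (note that isolated points of $\prim C$ and of $\prima=\prim C\times\prim B$ correspond, since $\prim B$ is a single point as $B$ is simple), whence Theorem~\ref{thm:nec-condition} already yields $\mloc\neq\Mloc\mloc$. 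So the substantive direction to establish is: if $\prima$ has a dense set of isolated points, then $\mloc=\Mloc\mloc$.

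For that direction, let $Y$ be the set of isolated points of $\prima$; by hypothesis $Y$ is open and dense, and $Z_0 = \prima\setminus Y$ is closed with empty interior. Write $A$ as an extension $0\to A(Y)\to A\to A/A(Y)\to 0$, where $A(Y)=\bigoplus_{t\in Y} A/t$ is a \C*-direct sum of simple \C*s (each $A/t\cong B$ up to the spatial identification, but more importantly each is simple, being a primitive quotient of $C\otimes B$ at an isolated point). Since $Y$ is dense, $A(Y)$ is an essential ideal of $A$, so $\mloc = \Mloc{A(Y)}$ and likewise all iterated local multiplier algebras of $A$ agree with those of $A(Y)$. Now $M\bigl(\bigoplus_{t\in Y} A/t\bigr)=\prod_{t\in Y}M(A/t)$, and for each isolated $t$ the only dense open subset of $\prim{A/t}=\{t\}$ is the whole space, so $\Mloc{A/t}=M(A/t)$; one then checks that local multipliers of a countable \C*-direct sum of this form are computed componentwise (the relevant dense open subsets of $Y$ being cofinal with the cofinite ones by second countability, or directly: essential ideals of $\prod M(A/t)$ contain $\bigoplus A/t$), giving $\mloc = \Mloc{A(Y)} = \prod_{t\in Y} M(A/t) = M(A(Y))$. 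Applying the same identity one level up, $\Mloc\mloc = \Mloc{M(A(Y))}$, and since $M(A(Y))=\prod M(A/t)$ is its own multiplier algebra with the same primitive-ideal structure, the componentwise computation again returns $\prod M(A/t)$, so $\Mloc\mloc = \mloc$.

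The main obstacle is the lemma that local multiplier algebras of countable \C*-direct sums of the above shape are computed componentwise; this is where one must be slightly careful, because a dense open subset of $Y$ need not be cofinite, and $M\bigl(\bigoplus_{t}A/t\bigr)$ has many essential ideals. The cleanest route is to invoke Lemma~\ref{lem:mult-KA}-style reasoning together with the structure of $Z=Z(\mloc)$: here $Z$ contains all the characteristic-function projections $\chi_{\{t\}}$, $t\in Y$, which are mutually orthogonal with supremum~$1$, so $K_A$ (the Somerset algebra) already equals $\bigoplus_t A/t = A(Y)$, and $\Ice{A(Y)}$ is easily described. One then applies Lemma~\ref{lem:why-K}: the condition $K_I=K_{A}$ for $I\in\Ice A$ holds because any essential ideal $I$ of $A(Y)$ has $I\cap(A/t)$ essential, hence equal to $A/t$ by simplicity, so $I=A(Y)=K_A$; and $\Mloc{K_A}=\Mloc{A(Y)}=M(A(Y))=M(K_A)$ follows since $M(A(Y))=\prod M(A/t)$ has no proper dense-open-indexed enlargement (every $A(U)$ for $U$ dense open in $\prim{M(A(Y))}$ already contains the essential ideal $A(Y)$, forcing $M(A(U))=M(A(Y))$). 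With Lemma~\ref{lem:why-K} in hand we get $\Mlocit{A}{n+1}=\Mlocit{A}{n}$ for all $n\geq 2$, and in particular $\Mloc\mloc=\mloc$, completing the proof.
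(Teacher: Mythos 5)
There is a genuine gap in your ``only if'' direction. The corollary requires showing that $\mloc\neq\Mloc\mloc$ whenever the isolated points of $\prima$ fail to be \emph{dense}, but your dichotomy only covers the case in which $\prima$ has \emph{no} isolated points (via the contrapositive of Theorem~\ref{thm:nec-condition}). The intermediate case --- some isolated points, but with non-dense union --- is left untreated: for $\prim C=\{0\}\cup[1,2]$, say, Theorem~\ref{thm:nec-condition} is silent because an isolated point exists, yet the corollary asserts $\mloc\neq\Mloc\mloc$. The paper closes exactly this gap: with $X_1$ the set of isolated points and $X_2=\prima\setminus\ol{X_1}$, one has $\mloc=\Mloc{I_1}\oplus\Mloc{I_2}$ for the ideals $I_j=A(X_j)$; if $X_1$ is not dense, then $X_2$ is a non-empty open \emph{perfect} subset which still carries a dense $G_\delta$ of closed points, so the argument of Theorem~\ref{thm:nec-condition} applies to $I_2=C(X_2)\otimes B$, giving $\Mloc{\Mloc{I_2}}\neq\Mloc{I_2}$ and hence $\Mloc\mloc\neq\mloc$. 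Some such reduction is indispensable for the stated equivalence.

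Your ``if'' direction is in substance the paper's argument: when the set $Y$ of isolated points is dense, every dense subset of $\prima$ contains $Y$, so $A(Y)$ is the minimal essential closed ideal, $\mloc=M(A(Y))$, and $\Mloc\mloc=\Mloc{M(A(Y))}=\Mloc{A(Y)}=\mloc$, the middle equality holding simply because $A(Y)$ is an essential ideal of $M(A(Y))$. Two of the justifications you offer along the way are incorrect, although neither is needed: $\prod_{t}M(A/t)$ does not have ``the same primitive-ideal structure'' as $\bigoplus_t A/t$ (its primitive ideal space is a corona-type object, far from discrete --- compare Example~\ref{exam:not_quasicentral}); and $K_A\neq A(Y)$ in general, since taking the single projection $z_1=1$ shows $A\subseteq K_A$, while $A(Y)\subsetneq A$ whenever $Y\neq\prima$. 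The clean route goes through the minimal essential ideal and does not mention $K_A$ or Lemma~\ref{lem:why-K} at all.
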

\begin{proof}
Let $X=\prima$, $X_1$ the set of isolated points in $X$ and $X_2=X\setminus\ol{X_1}$. Then $X_1$ and $X_2$ are open subsets of~$X$
with corresponding closed ideals $I_1=A(X_1)$ and $I_2=A(X_2)$ of~$A$.
If $X_1$ is dense, $I_1$ is the minimal essential closed ideal of $A$ so $\mloc=M(I_1)$. It follows that
\[
\Mloc\mloc=\Mloc{M(I_1)}=\Mloc{I_1}=\mloc.
\]
In the general case, $\mloc=\Mloc{I_1}\oplus\Mloc{I_2}$ by \cite[Lemmas 3.3.4 and 3.3.6]{AM03}.
If $X_2\ne\emptyset$, it contains a dense $G_\delta$ subset of closed points and so $I_2=C(X_2)\otimes B$ satisfies all
the assumptions in Theorem~\ref{thm:nec-condition} while $X_2$ is a perfect space. It follows that
\begin{equation*}
\begin{split}
\Mloc\mloc &= \Mloc{\Mloc{I_1}\oplus\Mloc{I_2}}=\Mloc{\Mloc{I_1}}\oplus\Mloc{\Mloc{I_2}}\\
                        &\ne\Mloc{I_1}\oplus\Mloc{I_2}=\mloc.\qedhere
\end{split}
\end{equation*}
\end{proof}

\section{A sheaf-theoretic approach}\label{sect:sheaves}

\noindent
In~\cite{AM10}, we developed the basics of a sheaf theory for general \C*s. This enabled us to establish the following formula
for $\mloc$ in \cite[Theorem~7.6]{AM10}:
\[
\mloc=\Alglim{T\in\mathcal T}\,\Gamma_b(T,\bundA),
\]
where
$\bundA$ is the upper semicontinuous \Cbund* canonically associated to the multiplier sheaf $\shma$ of $A$ \cite[Theorem~5.6]{AM10} and
$\Gamma_b(T,\bundA)$ is the \C* of bounded continuous sections of $\bundA$ on~$T$. A like description is available for the
injective envelope:
\[
I(A)=\Alglim{T\in\mathcal T}\,\Gamma_b(T,\bundI),
\]
where the \Cbund* $\bundI$ corresponds to the injective envelope sheaf $\shia$ of~$A$, see \cite[Theorem~7.7]{AM10}.
These descriptions are compatible, by \cite[Corollary~7.8]{AM10}.
Since a continuous section is determined by its restriction to a dense subset,
the *-homomorphisms $\Gamma_b(T,\bundB)\to\Gamma_b(T',\bundB)$,
$T'\subseteq T$, $T'\in\mathcal T$ are injective for any \Cbund* $\bundB$ and thus isometric. Consequently,
an element $y\in\Mloc\mloc$ is contained in some \Cs* $\Gamma_b(T,\bundI)$ and will belong to $\mloc$ once we find
$T'\subseteq T$, $T'\in\mathcal T$ such that $y\in\Gamma_b(T',\bundA)$.

\begin{remark}\label{rem:usc-to-cts}
Let $a\in\Gamma_b(T,\bundA)$ for a separable \C*~$A$. By applying Lemma~\ref{lem:cty-on-dense-Gdelta} to the negative of
the upper semicontinuous norm function on~$\bundA$, there is always a smaller dense $G_\delta$
subset $S\subseteq\sepa\cap T$ on which the restriction of the function $t\mapsto\|a(t)\|$ is continuous.
\end{remark}

On the basis of this, we shall obtain a concise proof of an extension of one of Somerset's main results, \cite[Theorem~2.7]{Somerset},
in this section. This extension is twofold: firstly, we replace the assumption of an identity by the more general hypothesis on~$A$
to be quasicentral. Secondly, we establish the result for \Cs*s of $\mloc$ containing~$A$.

The following concept was introduced and initially studied by Delaroche~\cite{Del67, Del68}.
A \C* $A$ is called \textit{quasicentral\/} if no primitive ideal of $A$ contains the centre $Z(A)$ of~$A$.
We recall some basic properties of quasicentral \C*s.

\begin{remark}\label{rem:quasicentral-basic}
Let $A$ be a quasicentral \C*.
\begin{enumerate}[(i)]
\item The mapping $\nu\colon\prima\to\maxza$, $t\mapsto t\cap Z(A)$ is well-defined, surjective and continuous.
\item The Dauns--Hofmann isomorphism $Z(M(A))\to C_b(\prima)$, $z\mapsto f_z$ such that
            $za+t=f_z(t)(a+t)$ for all $a\in A$, $z\in Z(M(A))$ and $t\in\prima$ maps $Z(A)$ onto $C_0(\prima)$;
            see~\cite[A.34]{RaeWil} and~\cite[Proposition~1]{Del67}.
\item Every approximate identity of $Z(A)$ is an approximate identity for $A$ and thus $A=Z(A)A$;
            see~\cite[Proposition~1]{Arch75}.
\end{enumerate}
\end{remark}

Part~(i) of the result below on the existence of local identities is already contained in \cite[Proposition~2]{Del67}
but we provide an independent brief proof along the lines of the proof of \cite[Theorem~5]{Arch75}.

\begin{lem}\label{lem:local-identities}
Let\/ $A$ be a quasicentral \C*, $C\subseteq\prima$\/ compact and\/ $t\in C$.
\begin{enumerate}[\rm(i)]
\item There exists\/ $z\in Z(A)_+$, $\|z\|=1$ such that\/ $z+s=1_{A/s}$, the identity in the primitive quotient\/ $A/s$ for all\/ $s\in C$.
\item Let\/ $U_1$ be an open neighbourhood of\/ $t$ contained in\/ $C$ and let\/ $U_2=\prima\setminus\ol{U_1}$. 
           If\/ $z\in Z(A)_+$ is as in~{\rm(i)} then\/ $z+A(U_2)$ is the identity in\/ $A/A(U_2)$.
\end{enumerate}
\end{lem}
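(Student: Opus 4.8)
The plan is to prove the two assertions in turn, using the characterisation of the Jacobson topology via closed ideals and the fact that a quasicentral \C* is generated by its centre in the sense of Remark~\ref{rem:quasicentral-basic}~(iii).

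\textbf{Part (i).} First I would recall that $A$ is quasicentral, so by Remark~\ref{rem:quasicentral-basic}~(ii) the Dauns--Hofmann isomorphism identifies $Z(A)$ with $C_0(\prima)$; write $z\leftrightarrow f_z$ for this correspondence, extended to $Z(M(A))\cong C_b(\prima)$. The defining property is that $za+s=f_z(s)(a+s)$ for all $a\in A$ and $s\in\prima$. Hence the condition ``$z+s=1_{A/s}$'' is equivalent to the purely topological requirement that $f_z(s)=1$; indeed, if $za+s=a+s$ for all $a$ then the image of $z$ in $M(A/s)$ acts as the identity on the essential ideal $A/s$, and conversely. So the task reduces to finding $f\in C_0(\prima)_+$ with $\|f\|=1$ and $f\equiv1$ on $C$. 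Since $\prima$ is a locally compact Hausdorff space (it is Hausdorff once restricted to the relevant quotient... actually one must be slightly careful: $\prima$ need not be Hausdorff) — here the cleaner route is to work directly. Take an approximate identity $(e_\lambda)$ of $Z(A)$; by Remark~\ref{rem:quasicentral-basic}~(iii) it is an approximate identity for $A$, so $f_{e_\lambda}(s)\to1$ for each $s$ (as $e_\lambda+s\to1_{A/s}$ in the strict topology of $M(A/s)$, forcing the scalar $f_{e_\lambda}(s)\to1$). The functions $f_{e_\lambda}$ lie in $C_0(\prima)$, are positive, bounded by $1$, and increase to $1$ pointwise on $\prima$. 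By Dini-type reasoning on the compact set $C$ — more precisely, using that $\{s:f_{e_\lambda}(s)>1-\eps\}$ is open and these cover $C$ — there is $\lambda$ with $f_{e_\lambda}>1-\eps$ on $C$; then rescale. To get exactly $1$ on $C$ rather than $1-\eps$, apply a continuous function $g\colon[0,1]\to[0,1]$ with $g\equiv1$ on $[1-\eps,1]$, $g(0)=0$: then $g(z)\in Z(A)_+$ (functional calculus stays in the commutative \C* $Z(A)$), has norm $1$, and $f_{g(z)}=g\circ f_{e_\lambda}\equiv1$ on $C$. This is the $z$ we want.

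\textbf{Part (ii).} Given $U_1$ open with $\ol{U_1}\subseteq C$ and $U_2=\prima\setminus\ol{U_1}$, I would identify $A/A(U_2)$ with the \C* corresponding to the open set $\ol{U_1}$ — more precisely, $A/A(U_2)$ has primitive ideal space $\prim{A/A(U_2)}\cong \prima\setminus U_2=\ol{U_1}\subseteq C$. The image $z+A(U_2)$ is central in $M(A/A(U_2))$, and by the Dauns--Hofmann correspondence for the quotient its scalar value at a point $s\in\ol{U_1}$ is just $f_z(s)$, which equals $1$ by part~(i) since $\ol{U_1}\subseteq C$. A central multiplier whose Dauns--Hofmann function is identically $1$ is the identity of the multiplier algebra, and in particular acts as the identity on $A/A(U_2)$; equivalently $z a - a\in A(U_2)$ for all $a\in A$, which one checks by evaluating in each primitive quotient $A/s$, $s\in\ol{U_1}$, where $za+s=f_z(s)(a+s)=a+s$.

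\textbf{Main obstacle.} The routine part is the functional-calculus bookkeeping; the one genuine subtlety is the potential non-Hausdorffness of $\prima$ and making the ``Dini'' step rigorous. For that I would avoid invoking Dini's theorem literally and instead argue with the open cover $\{f_{e_\lambda}^{-1}((1-\eps,\infty))\}_\lambda$ of the compact set $C$ together with the fact that $(f_{e_\lambda})$ is increasing, so the cover is nested and a single $\lambda$ suffices. A secondary point to handle cleanly is justifying that $f_{e_\lambda}(s)\to1$ for each fixed $s$: this follows because $e_\lambda+s$ is an approximate identity of $A/s$ (again Remark~\ref{rem:quasicentral-basic}~(iii) applied to the quotient, which is again quasicentral with centre the image of $Z(A)$), and a central approximate identity consisting of scalars $f_{e_\lambda}(s)1_{M(A/s)}$ can only converge strictly to $1$ if the scalars converge to $1$. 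With these two points pinned down, the rest is immediate.
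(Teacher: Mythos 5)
Your proof is correct in substance, but part (i) follows a genuinely different route from the paper's. The paper pushes the compact set $C$ forward under the continuous map $\nu\colon\prima\to\maxza$, invokes Urysohn's lemma in the locally compact \emph{Hausdorff} space $\maxza$ to get $f\in C_0(\maxza)_+$ with $f\equiv1$ on $\nu(C)$, and pulls back via $f_z=f\circ\nu$. You instead stay inside the (possibly non-Hausdorff) space $\prima$ and run an Archbold-style density argument: an increasing approximate identity $(e_\lambda)$ of $Z(A)$ is one for $A$, so $f_{e_\lambda}\to1$ pointwise, compactness of $C$ plus the nested open cover $\{f_{e_\lambda}>1-\eps\}$ gives a single $\lambda$ with $f_{e_\lambda}>1-\eps$ on $C$, and functional calculus flattens this to exactly $1$. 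Both are sound; the paper's is shorter because it outsources the Urysohn step to \cite[1.7.5]{Ped89}, while yours is more self-contained and makes explicit why non-Hausdorffness of $\prima$ is harmless (compactness needs no separation axiom). Your part (ii) is essentially the paper's argument --- evaluate in the primitive quotients $A/s$ lying over $\ol{U_1}$ and use $\bigcap_{s\supseteq A(U_2)}s=A(U_2)$.

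One small slip to repair in (ii): the lemma only assumes $U_1\subseteq C$, not $\ol{U_1}\subseteq C$, so you may not quote part (i) directly at points of $\ol{U_1}\setminus U_1$. This is a one-line fix: either note that $\{s\in\prima\mid f_z(s)\geq1\}$ is closed (as $f_z$ is continuous and $\|f_z\|=1$) and contains $U_1$, hence contains $\ol{U_1}$; or, as the paper does, observe that $U_1$ is dense in $\ol{U_1}\cong\prim{A/A(U_2)}$ and that an identity holding in $A/s$ for a dense set of $s$ already forces $za-a\in\bigcap_{s\in U_1}s=A(U_2)$.
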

\begin{proof}
As $\maxza$ is a locally compact Hausdorff space, there is $f\in C_0(\maxza)_+$ with $\|f\|=1$ such that
$f(\nu(s))=1$ for all $s\in C$ \cite[1.7.5]{Ped89}. Identifying $Z(A)$ with $C_0(\prima)$, see Remark~\ref{rem:quasicentral-basic} above,
we obtain $z\in Z(A)_+$, $\|z\|=1$ such that $f_z=f\circ\nu$ and hence $z+s=1_{A/s}$ for all $s\in C$. This proves~(i).

Now let $U_1$ be an open neighbourhood of $t$ contained in $C$ and put $U_2=\prima\setminus\ol{U_1}$. Let $z\in Z(A)_+$ be as in~(i).
Then $\ol{U_1}=\{s\in\prima\mid A(U_2)\subseteq s\}$ is homeomorphic to $\prim{A/A(U_2)}$ via
$s\mapsto s/A(U_2)$ \;\cite[4.1.11]{Ped79}.
Therefore, any identity which holds in $\bigl(A/A(U_2)\bigr)\big/\bigl(s/A(U_2)\bigr)$ for a dense set of~$s$ holds in
$A/A(U_2)$. Since $\bigl(A/A(U_2)\bigr)\big/\bigl(s/A(U_2)\bigr)\cong A/s$ and $z+s=1_{A/s}$ for all $s\in U_1$,
it follows that $z+A(U_2)=1_{A/A(U_2)}$ as claimed in~(ii).
\end{proof}

With the help of Lemma~\ref{lem:local-identities} we can extend a key result in \cite{AM10}, viz.\ \cite[Lemma~6.9]{AM10},
from the unital case to the situation of quasicentral \C*s. Though the proof is similar, we include the details for completeness.

\begin{prop}\label{prop:iso-of-fibres}
Let\/ $A$ be a quasicentral \C*, and let\/ $t\in\prima$ be a closed and separated point. Then the natural mapping\/
$\pfi_t\colon\bundA_t\to A/t$ is an isomorphism.
\end{prop}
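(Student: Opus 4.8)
The plan is to identify the fibre $\bundA_t$ of the multiplier bundle at a closed, separated point $t$ with $A/t$ by producing a two-sided inverse to the natural map $\pfi_t$, using a local identity provided by Lemma~\ref{lem:local-identities}. Recall that $\bundA_t$ is the fibre at $t$ of the bundle associated to the multiplier sheaf $\shma$, so a germ in $\bundA_t$ is represented by a section of $\bundA$ over some open neighbourhood of $t$, i.e.\ by an element $m\in M(A(U))$ for some open $U\ni t$, and $\pfi_t$ sends the germ of $m$ to the image of $m$ in $M(A/t)$ composed with the identification $M(A/t)\supseteq A/t$; the point is to show this lands in $A/t$ and that the resulting map is bijective. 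Since $t$ is closed, $A/s$ is unital for $s$ near $t$ in a suitable sense, and the map $M(A(U))\to A/t$ will turn out to be surjective with the right kernel.

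First I would fix a compact neighbourhood $C$ of $t$ in $\prima$ (available since $\prima$ is locally compact, as $A$ is separable, and in fact we only need $C$ to contain an open neighbourhood $U_1$ of $t$), and apply Lemma~\ref{lem:local-identities}~(i) to obtain $z\in Z(A)_+$ with $\|z\|=1$ and $z+s=1_{A/s}$ for all $s\in C$. Choosing $U_1\subseteq C$ open with $t\in U_1$ and setting $U_2=\prima\setminus\ol{U_1}$, part~(ii) gives $z+A(U_2)=1_{A/A(U_2)}$; so on the open set $U=U_1\cup U_2$ (which is dense and contains $t$) the ideal $A(U)=A(U_1)\oplus A(U_2)$ has the property that $z$ acts as a local identity on the $U_1$-summand. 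The key step is then to show: for any germ in $\bundA_t$, represented by $m\in M(A(V))$ with $t\in V$, after shrinking $V$ we may assume $V\subseteq U_1$, and then $zm\in A(V)\subseteq A$ represents the same germ as $m$ (because $z$ is a local identity near $t$, so $m-zm$ vanishes in a neighbourhood of $t$, hence has zero germ). This shows $\bundA_t$ is already generated by germs of elements of $A$, so $\pfi_t$ has image in $A/t$ and is surjective onto $A/t$. For injectivity one uses that $t$ is a \emph{separated} point: if $a\in A$ has $a\in t$, then the germ of $a$ at $t$ in $\bundA$ is zero — this is exactly where separatedness enters, via the sheaf-theoretic fact (from \cite{AM10}, underlying \cite[Lemma~6.9]{AM10}) that at a separated point the stalk of the structure sheaf separates, so the germ of a section vanishing at $t$ vanishes near $t$.

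The main obstacle I anticipate is the injectivity/well-definedness at the sheaf level: one must be careful that two representatives $m_1\in M(A(V_1))$, $m_2\in M(A(V_2))$ with the same image in $A/t$ actually agree as germs, i.e.\ agree on $M(A(W))$ for some smaller $W\ni t$. The separatedness of $t$ is the tool here, and the argument should run exactly parallel to the unital case \cite[Lemma~6.9]{AM10}, with the global identity replaced throughout by the local identity $z$ of Lemma~\ref{lem:local-identities}; the localisation to $U_1$ via $z$ is precisely what makes this replacement work. A secondary, more routine point is checking that $\pfi_t$ as defined in the bundle construction of \cite{AM10} really is a $*$-homomorphism and that the inverse we construct is continuous/bounded — this should follow automatically once surjectivity and injectivity are established, since a bijective $*$-homomorphism between \C*s is an isometric isomorphism.
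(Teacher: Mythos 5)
There is a genuine gap in your surjectivity step. You claim that if $V\subseteq U_1$ and $z\in Z(A)_+$ is a local identity on a compact neighbourhood containing $V$, then $zm\in A(V)\subseteq A$ for $m\in M(A(V))$. This cannot work: since $z+s=1_{A/s}$ for every $s\in V$, the Dauns--Hofmann identity gives $za-a\in s$ for all $a\in A(V)$ and all $s\in V$, and as $\bigcap_{s\in V}\bigl(s\cap A(V)\bigr)=0$ we get $za=a$; that is, $z$ acts as the \emph{identity} multiplier on $A(V)$, so $zm=m$ in $M(A(V))$. Your claim would therefore force $M(A(V))=A(V)$ for all sufficiently small $V$, which already fails for the quasicentral algebra $A=C_0(\RR)$ with $V=(0,1)$, where $M(A(V))=C_b\bigl((0,1)\bigr)\neq C_0\bigl((0,1)\bigr)$. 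The underlying error is that the product of an element of $A$ with a multiplier of the ideal $A(V)$ need not lie in $A(V)$; only multiplication by elements \emph{of the ideal itself} is absorbed. Correspondingly, your observation that $m-zm$ has zero germ is vacuously true ($m-zm=0$) and gives no information about whether the germ of $m$ comes from~$A$.

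The paper's proof repairs exactly this point. One chooses $e\in A(U)_+$ with $e+t=z+t$ (possible because $A/t$ is unital and simple, so $(A(U)+t)/t=A/t$), and then uses the \emph{separatedness} of $t$---via the continuity of the norm function $N(z-e)$ at $t$, \cite[Lemma~6.4]{AM10}---to find a neighbourhood $U_1$ of $t$ with $N(z-e)\le 1/2$ on $\ol{U_1}$. Setting $U_2=\prima\setminus\ol{U_1}$, Lemma~\ref{lem:local-identities}~(ii) makes $z+A(U_2)$ the unit of $A/A(U_2)\subseteq M(A(U_1))$, whence $e+A(U_2)$ is invertible there; lifting the inverse to $y\in A$ one obtains $\rest{m}{\shma(U_1)}=(me+A(U_2))(y+A(U_2))$ with $me\in A(U)\subseteq A$, which is what places the germ of $m$ in the image of $A\to\bundA_t$. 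Note in particular that separatedness is needed for surjectivity, not only for the injectivity statement $\ker\iota_t=t$ as your sketch suggests. Your identification of Lemma~\ref{lem:local-identities} as the key input, and your handling of injectivity, are otherwise in line with the paper, but the surjectivity argument as written does not go through and needs to be replaced by the approximation-and-inversion argument above.
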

\begin{proof}
Since $A$ is quasicentral, the \C* $A/t$ is unital, and since $t$ is a closed point, $A/t$ is simple.
Therefore the natural mapping $\pfi_t\colon\bundA_t\to \Mloc{A/t}$ given by \cite[Proposition~6.2]{AM10}
simplifies to $\pfi_t\colon\bundA_t\to A/t$.
As $t$ is a separated point, $\ker\iota_t=t$ where $\iota_t\colon A\to\bundA_t$ is the canonical map
\cite[Proposition~6.5]{AM10}. Since $\pfi_t\circ\iota_t=\pi_t$, where $\pi_t$ is the canonical surjection $A\to A/t$, we find that $\pfi_t$
is injective when restricted to $\iota_t(A)$.

Let $U$ be an open neighbourhood of $t$ in $\prima$, and take $m\in M(A(U))$.
Let $C$ be a compact neighbourhood of $t$ contained in~$U$ \cite[4.4.4]{Ped79}.
By Lemma~\ref{lem:local-identities}, there is $z\in Z(A)_+$, $\|z\|=1$ such that
$z+s=1_{A/s}$ for all $s\in C$.
Choose $e\in A(U)_+$ with the property that $\|e\|=1$ and $e+t=z+t$ in~$A/t$. (Note that $A(U)+t/t=A/t$ as $A/t$ is simple.)
Since $N(z-e)(t)=0$ and $N(z-e)$ is continuous at~$t$, as $t$ is a separated point \cite[Lemma~6.4]{AM10},
there is an open neighbourhood $U_1$ of $t$ contained in $C$ such
that $N(z-e)(s)<1/2$ for every $s\in U_1$. Set $Y=\ol{U_1}$ and $U_2=\prima\setminus Y$.
By Lemma~\ref{lem:local-identities}~(ii), $z+A(U_2)$ is the identity of $A/A(U_2)$.
Since $A(U_1)$ sits as an essential ideal in $A/A(U_2)$, we have an
embedding of unital \C*s $A/A(U_2)\subseteq M(A(U_1))=\shma(U_1)$. The set
$\{ s\in \prima \mid N(z-e)(s)\le 1/2\}$ is closed in $\prima$ and contains $U_1$; consequently $N(z-e)(s)\le 1/2$ for every $s\in Y$.

Since $N_{A/A(U_2)}\bigl((z-e)+A(U_2)\bigr)(s)=N_A(z-e)(s)\le 1/2$ for every
$s\in Y$, we get that $\|1_{A/A(U_2)}-e+A(U_2)\|=\|(z-e)+A(U_2)\|\le 1/2<1$, and thus
$e+A(U_2)$ is invertible in $A/A(U_2)$. Take any $y\in A$ such that $y+A(U_2)=(e+A(U_2))^{-1}$. Then we have
\begin{equation*}
\rest{m}{\mathfrak M _A(U_1)}
  =\rest{m}{\mathfrak M_A(U_1)}(e+A(U_2))(y+A(U_2))
  =(me +A(U_2))(y+A(U_2))\in A/A(U_2),
\end{equation*}
since $me\in A(U)\subseteq A$. As a result, $\rest{m}{\mathfrak M _A(U_1)}$ belongs to the image of
the map $A\to \shma(U_1)$. We thus find that the image of $m$
in $\bundA_t=\varinjlim _{t\in W} \shma(W)$ belongs to the image
of the map $A\to \bundA_t$, and it turns out that the map $A/t\to \bundA_t$ is
surjective. Since it is also injective, we conclude that it is an isomorphism, and so its inverse, $\varphi _t$, must be an isomorphism too.
\end{proof}

The following example shows that the statement of Proposition~\ref{prop:iso-of-fibres} can fail if the \C* is not quasicentral.

\begin{exem}\label{exam:not_quasicentral}
Let $B=C_b(\NN, M_2(\CC))$ be the \C* of all bounded (continuous) functions from $\NN$ to the $2\times2$ complex matrices.
We shall write elements of $B$ as $x=(x(n))_{n\in\NN}$. Let $A$ be the \Cs* of $B$ consisting of those $x$ such that
$x_{ij}(n)\to0$, $n\to\infty$ for $(i,j)\ne(1,1)$ and $x_{11}(n)\to\mu(x)$, $n\to\infty$.
Then $A$ is a non-unital separable $2$-subhomogeneous \C* with Hausdorff primitive spectrum.
In fact, the primitive ideals of $A$ are given by $t_\infty=\ker\mu$ and, for each $n\in\NN$,
$t_n=\{x\in A\mid x(n)=0\}$ (with corresponding irreducible representations given by
$\pi_\infty\colon A\to\CC$, $\pi_\infty(x)=\mu(x)$ and $\pi_n\colon A\to M_2(\CC)$, $\pi_n(x)=x(n)$, $x\in A$).
Clearly $\prima$ is homeomorphic to the one-point compactification $\NN_\infty$ of~$\NN$,
since $\{U_n\mid n\in\NN\}$ with $A(U_n)=\bigcap_{j=1}^n t_j$ forms a neighbourhood basis for~$t_\infty$.

As $C_0(\NN,M_2(\CC))=t_\infty$, $t_\infty$ is an essential ideal of $A$ and $\mloc=\Mloc{t_\infty}=M(t_\infty)=B=I(A)$.
Moreover, $M(A)$ consists of those $x$ satisfying
$\lim_n x_{12}(n)=\lim_n x_{21}(n)=0$, $\lim_n x_{11}(n)=\mu(x)$ and $(x_{22}(n))_{n\in\NN}$ is bounded.
It follows that $\prim{M(A)}=\beta\NN\cup\{t_\infty\}$, where all the ultrafilters in $\beta\NN$ yield characters of $M(A)$
via $\lim_{\mathcal U}x_{22}(n)$. Any open neighbourhood of $t_\infty$ in $\prim{M(A)}$ must contain one of the $U_n$'s
and hence $t_m$ for $m\geq n+1$. As $\NN$ is dense in $\beta\NN$ we conclude that no point in $\beta\NN\setminus\NN$ can be
separated from~$t_\infty$.

This leads to the following description of the associated upper semicontinuous \Cbund*.
For each $n\in\NN$, $\bundA_{t_n}\cong A/t_n=M_2(\CC)$.
On the other hand, $\bundA_{t_\infty}=\Dirlim{n} M(A(U_n))$
with the connecting mappings given by
\[
\bigl(0,\ldots,0,y(n+1),y(n+2),\ldots\bigr)\longmapsto\bigl(0,\ldots,0,0,y(n+2),\ldots\bigr)
\]
taking into account that $M(A(U_n))\cong M(A)$ for each~$n$.
It follows that $\bundA_{t_\infty}$ is indeed commutative and isomorphic to
$C(\{t_\infty\}\cup\beta\NN\setminus\NN)=\CC\times\ell^\infty/c_0$. As a result, the homomorphism
$\pfi_{t_\infty}\colon\bundA_{t_\infty}\to A/t_\infty=\CC$   is far from being injective.
Note that $t_\infty\supseteq Z(A)\cong c_0$ so that $A$ is not quasicentral.

To complete the picture we note that, in the \Cbund* $\bundI$ associated to the injective envelope sheaf, the fibres are
$\bundI_{t_n}=M_2(\CC)$, $n\in\NN$ and $\bundI_{t_\infty}=M_2(\ell^\infty/c_0)$ with the embedding $\bundA_{t_\infty}\to\bundI_{t_\infty}$
simply the diagonal map.
\end{exem}

A quasicentral \C* $A$ is said to be \textit{central\/} if the mapping $\nu$ of Remark~\ref{rem:quasicentral-basic}~(i)
is injective. Since this is equivalent to the hypothesis that $A$ has Hausdorff primitive spectrum \cite[Proposition~3]{Del67},
the same arguments as in Theorem~6.10 and Corollary~6.11 of \cite{AM10} yield the following consequence.

\begin{corol}\label{cor: central-case}
Let\/ $A$ be a central separable \C*. Then all the fibres\/ $A_t=A/t$, $t\in\prima$ are isomorphic to the fibres\/ $\bundA_t$
associated to the multiplier sheaf\/ $\shma$ of\/~$A$. Indeed, the multiplier sheaf\/ $\shma$ of\/ $A$ is isomorphic
to the sheaf\/ $\Gamma_b(-,\bundA)$ of bounded continuous local sections of the \Cbund*\/ $\bundA$ associated to\/~$\shma$.
\end{corol}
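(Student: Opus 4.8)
The plan is to read off the fibre statement directly from Proposition~\ref{prop:iso-of-fibres} and then to transfer the sheaf-theoretic argument of \cite[Theorem~6.10]{AM10} and \cite[Corollary~6.11]{AM10} from the unital to the central case, the only input of that argument that genuinely uses the identity being \cite[Lemma~6.9]{AM10}, whose quasicentral replacement is precisely Proposition~\ref{prop:iso-of-fibres}. First I would observe that, $A$ being central, $\prima$ is Hausdorff by \cite[Proposition~3]{Del67}. Hence every singleton $\{t\}$ is closed, so $\ol{\{t\}}=\{t\}$; and for any $t'\in\prima$ with $t'\notin\ol{\{t\}}$, that is, $t'\ne t$, the Hausdorff property supplies disjoint neighbourhoods of $t$ and~$t'$. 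Thus every point of $\prima$ is closed and separated, and since a central \C* is in particular quasicentral, Proposition~\ref{prop:iso-of-fibres} applies at every $t\in\prima$: the natural map $\pfi_t\colon\bundA_t\to A/t$ is an isomorphism. This establishes the first assertion.

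For the second assertion I would work with the canonical morphism of sheaves $\theta\colon\shma\to\Gamma_b(-,\bundA)$ which on an open set $U\subseteq\prima$ sends a multiplier $m\in\shma(U)=M(A(U))$ to the section $\hat m\colon t\mapsto\hat m(t)$, $t\in U$, where $\hat m(t)\in\bundA_t$ is the germ of $m$ at $t$; that $\hat m$ is a bounded continuous section and that such sections are plentiful is part of the construction of $\bundA$ in \cite[Theorem~5.6]{AM10}. It suffices to show that $\theta_U$ is bijective for every open $U$, for then $\theta$ is an isomorphism of sheaves. Injectivity is immediate, since $\bundA$ is built so that $\bundA_t$ is the stalk of $\shma$ at $t$: an $m$ with $\hat m=0$ has zero germ at every point of $U$, hence $m=0$. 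For surjectivity I would reproduce the proof of \cite[Theorem~6.10]{AM10} step by step. The fibre isomorphisms $\pfi_t$ just obtained, together with the facts that for closed $t$ the quotient $A/t$ is unital (quasicentrality) and simple (closedness of $t$), that $Z(A)\cong C_0(\prima)$, and that $A=Z(A)A$ (Remark~\ref{rem:quasicentral-basic}), and the local identities provided by Lemma~\ref{lem:local-identities}, allow one to realise a given bounded continuous section of $\bundA$ over $U$ on a neighbourhood of each point of $U$ as $\hat m$ for a suitable multiplier $m$ of the corresponding ideal, and then to glue these local pre-images using that $\shma$ is a sheaf. The resulting identifications $\shma(U)\cong\Gamma_b(U,\rest{\bundA}{U})$ are natural in $U$, whence $\shma\cong\Gamma_b(-,\bundA)$; and, exactly as in \cite[Corollary~6.11]{AM10}, since $\bundA_t=A/t$ this is the statement of the corollary.

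I expect the principal difficulty to be bookkeeping rather than a new idea: one must verify that the identity of $A$ enters the proofs of \cite[Theorem~6.10]{AM10} and \cite[Corollary~6.11]{AM10} only through the single appeal to \cite[Lemma~6.9]{AM10}, so that each such use can be replaced by Proposition~\ref{prop:iso-of-fibres}, Lemma~\ref{lem:local-identities}, or Remark~\ref{rem:quasicentral-basic}. Once this is checked the corollary follows, the concluding statement about the multiplier sheaf being merely a reformulation of the fibrewise and sectionwise identifications above.
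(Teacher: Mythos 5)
Your proposal is correct and follows essentially the same route as the paper: the paper's entire argument is the observation that centrality is equivalent to quasicentrality plus Hausdorff primitive spectrum (so every point is closed and separated and Proposition~\ref{prop:iso-of-fibres} applies fibrewise), after which it simply invokes ``the same arguments as in Theorem~6.10 and Corollary~6.11 of \cite{AM10}''. Your more detailed account of how the sheaf isomorphism is assembled, and of the bookkeeping needed to confirm that the identity enters only through \cite[Lemma~6.9]{AM10}, is exactly the verification the paper leaves implicit.
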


Every \C* $A$ contains a largest quasicentral ideal $J_A$, which is the intersection of all closed ideals in $A$ that contain~$Z(A)$
\cite[Proposition~1]{Del68}. Clearly, the hypothesis in our main result of this section below is equivalent to the assumption
that $J_A$ is essential.

\begin{theor}\label{thm:mlocmloc-is-mloc-one}
Let\/ $A$ be a separable \C*  such that\/ $\prima$ contains a dense $G_\delta$ subset consisting of closed points.
Suppose\/ $A$ contains a quasicentral essential closed ideal.
If\/ $B$ is a \Cs* of\/ $\mloc$ containing\/ $A$ then\/ $\Mloc B\subseteq\mloc$. In particular, $\Mloc\mloc=\mloc$.
\end{theor}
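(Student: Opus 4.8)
The plan is to show that every $y\in\Mloc B$ restricts, on a suitably small dense $G_\delta$ subset of $\prima$, to a bounded continuous section of the \Cbund*\ $\bundA$; by the formula $\mloc=\Alglim{T\in\calT}\,\Gamma_b(T,\bundA)$ this forces $y\in\mloc$. Quasicentrality will enter only through Proposition~\ref{prop:iso-of-fibres}, which over a closed separated point $t$ identifies the fibre $\bundA_t$ with the \emph{simple unital} \C*\ $A/t$. First I would reduce to the case that $A$ itself is quasicentral: if $J=J_A$ denotes the largest quasicentral ideal of $A$, it is essential by the remark preceding the theorem, it is separable, and $\Mloc A=\Mloc J$ \cite{AM03}; moreover $\prim J$ is homeomorphic to the dense open subset $\{t\in\prima\mid J\nsubseteq t\}$ of $\prima$, so it again carries a dense $G_\delta$ of closed points. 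Since $J\subseteq A\subseteq B\subseteq\Mloc A=\Mloc J$, I may replace $A$ by $J$ and assume henceforth that $A$ is quasicentral. The ``in particular'' clause is then the case $B=\mloc$.

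So let $A$ be separable and quasicentral with a dense $G_\delta$ of closed points in $\prima$, and fix $y\in\Mloc B$, say $y\in M(J')$ for a closed essential ideal $J'$ of $B$; then $y\in\Mloc B\subseteq I(B)=I(A)$ by \cite[Proposition~2.14]{AM08}, so $y\in\Gamma_b(T_1,\bundI)$ for some $T_1\in\calT$ by \cite[Theorem~7.7]{AM10}. By Lemma~\ref{lem:like-somersets} I may pick $h\in J'$ with $hz\ne0$ for every non-zero projection $z\in Z$. Now choose a countable dense subset $\{a_k\mid k\in\NN\}$ of $A$. For all $j,\ell\in\NN$ one has $a_jha_\ell\in J'$ and hence $ya_jha_\ell\in J'\subseteq B\subseteq\mloc$; so, together with $h$ and the $a_k$, this is a \emph{countable} family of elements of $\mloc$, each of which is a bounded continuous section of $\bundA$ over some member of $\calT$. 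Intersecting all these domains with $T_1$, with $\sepa$, with the given dense $G_\delta$ of closed points, and (via Remark~\ref{rem:usc-to-cts}) with a set on which $t\mapsto\|h(t)\|$ is continuous, I obtain one dense $G_\delta$ subset $T\subseteq T_1$ of closed separated points on which $y$ is a section of $\bundI$, each of $h,\,a_k,\,ya_jha_\ell$ is a section of $\bundA$, and, by Proposition~\ref{prop:iso-of-fibres}, $\bundA_t$ is (isomorphic to) the simple unital \C*\ $A/t$ with the canonical map $\iota_t\colon A\to\bundA_t$ surjective. Shrinking $T$ to $\{t\in T\mid h(t)\ne0\}$ keeps it a dense $G_\delta$: otherwise $h$ would vanish on $V\cap T$ for a non-empty open $V\subseteq\prima$, forcing $hz_V=0$ with $z_V\in Z$ a non-zero projection, contrary to the choice of $h$.

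The heart of the argument is then a fibrewise computation at each $t\in T$. Since $\bundA_t\cong A/t$ is simple and unital and $h(t)\ne0$, the algebraic two-sided ideal of $\bundA_t$ generated by $h(t)$ is all of $\bundA_t$, so $1_{\bundA_t}=\sum_{i=1}^{n}\iota_t(c_i)\,h(t)\,\iota_t(d_i)$ for suitable $c_i,d_i\in A$. Given $\eps>0$, replacing each $c_i,d_i$ by a sufficiently close member $a_{j_i},a_{\ell_i}$ of the countable dense set yields $\bigl\|1_{\bundA_t}-\sum_i\iota_t(a_{j_i})\,h(t)\,\iota_t(a_{\ell_i})\bigr\|<\eps$. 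Because $1_{\bundA_t}$ is the identity of $\bundI_t$ and evaluation at $t$ is a $*$-homomorphism, $(ya_{j_i}ha_{\ell_i})(t)=y(t)\,\iota_t(a_{j_i})\,h(t)\,\iota_t(a_{\ell_i})\in\bundA_t$, whence
\[
\operatorname{dist}\bigl(y(t),\bundA_t\bigr)\ \le\ \Bigl\|\,y(t)\Bigl(1_{\bundA_t}-\sum_i\iota_t(a_{j_i})\,h(t)\,\iota_t(a_{\ell_i})\Bigr)\Bigr\|\ \le\ \|y\|\,\eps .
\]
As $\eps>0$ is arbitrary and $\bundA_t$ is closed in $\bundI_t$, I conclude $y(t)\in\bundA_t$ for every $t\in T$. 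Thus $y$, restricted to $T$, is a continuous section of $\bundI$ with values in the sub-\Cbund*\ $\bundA$, so $y\in\Gamma_b(T,\bundA)\subseteq\mloc$, and this element of $\mloc$ is the original $y$ by the compatibility of the two direct-limit descriptions \cite[Corollary~7.8]{AM10}. Therefore $\Mloc B\subseteq\mloc$, and $B=\mloc$ gives $\Mloc\mloc=\mloc$.

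The step I expect to be the main obstacle is the passage from the purely pointwise statement ``$y(t)\in\bundA_t$ for each $t$'' to an honest section: the witnesses $c_i,d_i$ (and the integer $n$) to simplicity of $A/t$ genuinely depend on $t$, so without care one would confront uncountably many auxiliary elements $ya_jha_\ell$ whose domains need not have a common refinement in $\calT$. Arranging, via a fixed countable dense subset of $A$, that only countably many such elements ever occur is exactly what makes their common domain a dense $G_\delta$ and lets the argument close; the remaining ingredients (the reduction to the quasicentral case, and the fact that $\bundA$ is a sub-\Cbund*\ of $\bundI$, so that a $\bundI$-section taking values in $\bundA$ is a $\bundA$-section) are routine given the results quoted above.
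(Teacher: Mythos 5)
Your argument is correct and follows essentially the same route as the paper's proof: reduce to the case that $A$ itself is quasicentral, use Lemma~\ref{lem:like-somersets} to produce $h$, use Proposition~\ref{prop:iso-of-fibres} to identify the fibres over closed separated points with the simple unital quotients $A/t$, and use countability to place all the relevant sections over one common dense $G_\delta$ before concluding $y(t)\in\bundA_t$ fibrewise. The only (harmless) deviation is that where the paper constructs a separable \Cs* $B'$ of $J$ containing $AhA$ with $yB'\subseteq B'$ and extracts an exact local unit $b_t(t)=1(t)$, you work directly with the countable family $\{ya_jha_\ell\}$ and an $\eps$-approximation of $1_{\bundA_t}$ inside the ideal generated by $h(t)$.
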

\begin{proof}
As $\Mloc I=\mloc$ for every $I\in\Ice A$, we can assume without loss of generality that $A$ itself is quasicentral.

Take $y\in M(J)$ for some $J\in\Ice B$, and let $T\in\mathcal T$ be such that $y\in\Gamma_b(T,\bundI)$
(recall that $\Mloc B\subseteq I(B)=I(A)$).
By hypothesis, and the fact that $\sepa$ itself is a dense $G_\delta$ subset, we can assume that $T$ consists of closed separated points
of $\prima$.  Take $h\in J$ with the property that $hz\ne0$ for every non-zero projection $z\in Z$ (Lemma~\ref{lem:like-somersets}).
By Remark~\ref{rem:usc-to-cts}, there is $S\in\calT$ contained in $T$ such that the function $t\mapsto\|h(t)\|$ is continuous when
restricted to~$S$ (viewing $h$ as a section in $\Gamma_b(S,\bundA)$). Consequently, the set $S'=\{t\in S\mid h(t)\neq0\}$ is open in $S$
and intersects every $U\in\calD$ non-trivially; it is thus a dense $G_\delta$ subset of $\prima$.
Replacing $T$ by $S'$ if necessary, we may assume that $h(t)\ne0$ for all $t\in T$.

A standard argument yields a separable \Cs* $B'$ of $J$ containing $AhA$ and such that $yB'\subseteq B'$ and $B'y\subseteq B'$.
Let $\{b_n\mid n\in\NN\}$ be a countable dense subset of~$B'$. For each~$n$, let $T_n\in\mathcal T$ be such that
$b_n\in\Gamma_b(T_n,\bundA)$. Letting $T'=\bigcap_n T_n\cap T\in\mathcal T$ we find that $B'\subseteq\Gamma_b(T',\bundA)$ and hence
$B'_t=\{b(t)\mid b\in B'\}\subseteq\bundA_t$ for each $t\in T'$.

For each $t\in T$, the \C*s $\bundA_t$ and $A/t$ are isomorphic, by Proposition~\ref{prop:iso-of-fibres} above, and since $A/t$ is unital and simple
(as $t$ is closed), we obtain $\bundA_t\,h(t)\,\bundA_t=\bundA_t$ for each $t\in T'$.
Consequently,
\[
\bundA_t=\bundA_t\,h(t)\,\bundA_t=(A/t)h(t)(A/t)=A_th(t)A_t=(AhA)_t\subseteq B'_t
\]
and thus $B'_t=\bundA_t$ for all $t\in T'$.
We can therefore find, for each $t\in T'$, an element $b_t\in B'$ such that $b_t(t)=1(t)$.
It follows that $y(t)=y(t)\,1(t)=(yb_t)(t)\in\bundA_t$ for all $t\in T'$, which yields $y\in\Gamma_b(T',\bundA)$.
This proves that $y\in\mloc$.
\end{proof}

\begin{corol}\label{cor:central-mloc}
For every central separable \C*\/ $A$, $\Mloc\mloc=\mloc$.
\end{corol}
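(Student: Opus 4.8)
The plan is to derive Corollary~\ref{cor:central-mloc} directly from Theorem~\ref{thm:mlocmloc-is-mloc-one} by checking that a central separable \C* satisfies its two hypotheses. So suppose $A$ is central and separable. First I would record that the primitive spectrum $\prima$ is Hausdorff: this is exactly the characterisation of centrality quoted just before Corollary~\ref{cor: central-case} (\cite[Proposition~3]{Del67}). In a Hausdorff space every point is closed, so trivially $\prima$ contains a dense $G_\delta$ subset consisting of closed points --- indeed, every subset consists of closed points, and $\prima$ itself is a dense $G_\delta$ subset of itself. This disposes of the first hypothesis.

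For the second hypothesis I need a quasicentral essential closed ideal of $A$. But $A$ itself is central, hence quasicentral, and $A$ is (trivially) an essential closed ideal of $A$. So the hypothesis ``$A$ contains a quasicentral essential closed ideal'' is satisfied by taking the ideal to be $A$. (Equivalently: by the remark preceding Theorem~\ref{thm:mlocmloc-is-mloc-one}, the largest quasicentral ideal $J_A$ of a quasicentral --- in particular central --- \C* is all of $A$, which is certainly essential.)

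With both hypotheses of Theorem~\ref{thm:mlocmloc-is-mloc-one} in place, the ``in particular'' clause of that theorem gives $\Mloc\mloc=\mloc$ immediately, which is the assertion of the corollary. I do not expect any genuine obstacle here --- the content of the corollary is entirely contained in the theorem once one observes that centrality supplies Hausdorffness (hence closed points) and makes $A$ itself into the required quasicentral essential ideal. If one wanted to state the argument without invoking \cite[Proposition~3]{Del67} one could instead note that a central \C* has $\nu$ injective by definition, and that an injective continuous surjection onto the Hausdorff space $\maxza$ forces $\prima$ to be Hausdorff; but it is cleaner simply to cite the equivalence already recalled in the text.

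\begin{proof}
Let $A$ be a central separable \C*. Since $A$ is central, $\prima$ is Hausdorff by \cite[Proposition~3]{Del67}; in particular every point of $\prima$ is closed, so $\prima$ itself is a dense $G_\delta$ subset consisting of closed points. Moreover $A$ is central, hence quasicentral, and $A$ is an essential closed ideal of itself. Thus both hypotheses of Theorem~\ref{thm:mlocmloc-is-mloc-one} are met, and that theorem yields $\Mloc\mloc=\mloc$.
\end{proof}
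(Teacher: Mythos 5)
Your argument is correct and is precisely the intended derivation: the paper states Corollary~\ref{cor:central-mloc} without a written proof as an immediate consequence of Theorem~\ref{thm:mlocmloc-is-mloc-one}, and your verification of its two hypotheses (centrality gives a Hausdorff, hence all-points-closed, primitive spectrum via \cite[Proposition~3]{Del67}, and $A$ itself serves as the quasicentral essential closed ideal, consistent with $J_A=A$ for quasicentral $A$) is exactly what is needed.
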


In~\cite{Ped78}, Pedersen showed that every derivation of a separable \C* $A$ becomes inner in $\mloc$ when extended
to the local multiplier algebra. His question whether every derivation of $\mloc$ is inner (when $A$ is separable) has since
been open and seems to be connected to the problem how much bigger $\Mloc\mloc$ can be. In this direction, Somerset
proved the next result in \cite[Theorem~2.7]{Somerset} in the unital case. Our approach shows that it is an immediate consequence of Pedersen's
theorem, in view of Theorem~\ref{thm:mlocmloc-is-mloc-one} above.

\begin{corol}\label{cor:der-inner-in-mloc}
Let\/ $A$ be a quasicentral separable \C*  such that\/ $\prima$ contains a dense $G_\delta$ subset consisting of closed points.
Then every derivation of\/ $\mloc$ is inner.
\end{corol}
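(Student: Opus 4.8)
The plan is to deduce Corollary~\ref{cor:der-inner-in-mloc} directly from Theorem~\ref{thm:mlocmloc-is-mloc-one} together with Pedersen's innerness theorem. First I would invoke Theorem~\ref{thm:mlocmloc-is-mloc-one} with $B=\mloc$ itself (which is certainly a \Cs* of $\mloc$ containing $A$): the hypotheses there are exactly those of the present corollary, namely that $A$ is separable with $\prima$ containing a dense $G_\delta$ of closed points, and that $A$ contains a quasicentral essential closed ideal. Here $A$ is itself quasicentral, so $A$ (being an essential closed ideal of itself) serves as the required quasicentral essential closed ideal. The conclusion is that $\Mloc\mloc=\mloc$.

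Next I would apply Pedersen's result \cite{Ped78}: since $\mloc$ is again a \C*, and it is separable-friendly enough for the statement (more precisely, Pedersen's theorem asserts that every derivation of a \C* $C$ extends to an inner derivation of $\Mloc C$), every derivation $\delta$ of $\mloc$ becomes inner when extended to $\Mloc\mloc$. But we have just shown $\Mloc\mloc=\mloc$, so the inner-implementing element already lies in $\mloc$. Hence $\delta$ is inner in $\mloc$, which is precisely the assertion of the corollary.

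The proof is therefore essentially two lines: the substantive content has already been absorbed into Theorem~\ref{thm:mlocmloc-is-mloc-one}. The only point requiring a moment's care is checking that Pedersen's theorem is being applied to an algebra in its intended scope---it is stated for derivations of a \C* extending to its local multiplier algebra, and here we are using it with the \C* taken to be $\mloc$, which is legitimate since $\mloc$ is a genuine \C*. I do not anticipate any real obstacle; the remark preceding the statement in the excerpt already signposts exactly this argument (``it is an immediate consequence of Pedersen's theorem, in view of Theorem~\ref{thm:mlocmloc-is-mloc-one}'').

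\begin{proof}
Since $A$ is quasicentral, $A$ is itself a quasicentral essential closed ideal of $A$, so the hypotheses of Theorem~\ref{thm:mlocmloc-is-mloc-one} are satisfied with $B=\mloc$. It follows that $\Mloc\mloc=\mloc$. By Pedersen's theorem \cite{Ped78}, every derivation of the separable \C* $\mloc$ becomes inner when extended to $\Mloc\mloc$; but $\Mloc\mloc=\mloc$, so every derivation of $\mloc$ is already inner.
\end{proof}
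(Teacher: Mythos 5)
There is a genuine gap in the second step. Pedersen's theorem \cite[Proposition~2]{Ped78} asserts that every derivation of a \emph{separable} \C* becomes inner in its local multiplier algebra, and separability is essential to his argument. But $\mloc$ is almost never separable, even when $A$ is (e.g.\ $\Mloc{K(H)}=B(H)$). You cannot apply Pedersen's result with ``the \C* taken to be $\mloc$''; indeed, the paper's introduction to this very corollary recalls that whether every derivation of $\mloc$ is inner---equivalently here, whether every derivation of $\mloc$ becomes inner in $\Mloc\mloc$---has been an open problem since 1978. So the step ``every derivation of the separable \C* $\mloc$ becomes inner when extended to $\Mloc\mloc$'' assumes essentially what is to be proved. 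Your first step, that $\Mloc\mloc=\mloc$ by Theorem~\ref{thm:mlocmloc-is-mloc-one} applied with $B=\mloc$, is correct but by itself does not yield the corollary.

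The paper's proof avoids this by never applying Pedersen's theorem to $\mloc$ itself. Given a derivation $d$ of $\mloc$, one first chooses a \emph{separable} \Cs* $B$ of $\mloc$ containing $A$ and invariant under $d$. Pedersen's theorem applies to the separable algebra $B$ (after extending $\rest dB$ to $\Mloc B$ via \cite[Theorem~4.1.11]{AM03}) and produces $y\in\Mloc B$ with $d_{\Mloc B}=\ad y$; Theorem~\ref{thm:mlocmloc-is-mloc-one}, applied with this intermediate $B$, is then what forces $y\in\mloc$. Finally, the uniqueness of extensions of derivations to the common injective envelope $I(B)=I(A)=I(\mloc)$ (\cite[Theorem~2.1]{HamOkaSaito}) shows that $\ad y$ agrees with $d$ on all of $\mloc$, not merely on $B$. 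Both the passage to a separable $d$-invariant subalgebra and the injective-envelope uniqueness argument are missing from your proposal, and without them the argument does not close.
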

\begin{proof}
Let $d\colon\mloc\to\mloc$ be a derivation. Let $B$ be a separable \Cs* of $\mloc$ containing $A$ which is invariant under~$d$.
By \cite[Theorem~4.1.11]{AM03}, $d_B=\rest dB$ can be uniquely extended to a derivation $d_{\Mloc B}\colon\Mloc B\to\Mloc B$.
Both derivations can be uniquely extended to their respective injective envelopes, by \cite[Theorem~2.1]{HamOkaSaito},
but since $I(B)=I(\Mloc B)$, we have $d_{I(B)}=d_{I(\Mloc B)}$. The same argument applies to the extension of $d$, since
$I(B)=I(A)=I(\mloc)$; in other words, $d_{I(\mloc)}=d_{I(B)}$ which we will abbreviate to~$\tilde d$.
By \cite[Proposition~2]{Ped78}, $d_{\Mloc B}=\ad y$ for some $y\in\Mloc B$; in fact, $y\in\mloc$ by
Theorem~\ref{thm:mlocmloc-is-mloc-one}. By uniqueness, $\tilde d=\ad y$ and hence $d=\ad y$ on $\mloc$.
\end{proof}

\smallskip


\begin{thebibliography}{25}

\bibitem{AM99}
{\sc P. Ara and M. Mathieu},
\emph{A simple local multiplier algebra},
Math. Proc. Cambridge Phil. Soc. {\bf 126} (1999), 555--564.

\bibitem{AM03}
{\sc P. Ara and M. Mathieu},
Local multipliers of \C*s,
Springer-Verlag, London, 2003.

\bibitem{AM06}
{\sc P. Ara and M. Mathieu},
\emph{A not so simple local multiplier algebra},
J. Funct. Anal. \textbf{237} (2006), 721--737.

\bibitem{AM08}
{\sc P. Ara and M. Mathieu},
\emph{Maximal \C*s of quotients and injective envelopes of \C*s},
Houston J. Math.  \textbf{34} (2008), 827--872.

\bibitem{AM10}
{\sc P. Ara and M. Mathieu},
\emph{Sheaves of \C*s},
Math. Nachr. \textbf{283} (2010), 21--39.

\bibitem{Arch75}
{\sc R. J. Archbold},
\emph{Density theorems for the centre of a \C*},
J. London Math. Soc.  (2) {\bf10} (1975), 189--197.

\bibitem{ArgFar09}
{\sc M. Argerami, D. R. Farenick and P. Massey},
\emph{The gap between local multiplier algebras of \C*s},
Quart. J. Math. {\bf60} (2009), 273--281.

\bibitem{ArgFarNext}
{\sc M. Argerami, D. R. Farenick and P. Massey},
\emph{Injective envelopes and local multiplier algebras of some spatial continuous trace \C*s},
Quart. J. Math., to appear.

\bibitem{Del67}
{\sc C. Delaroche},
\emph{Sur les centres des \text{C*}-alg\`ebres},
Bull. Sc. math. {\bf91} (1967), 105--112.

\bibitem{Del68}
{\sc C. Delaroche},
\emph{Sur les centres des \text{C*}-alg\`ebres},~II,
Bull. Sc. math. {\bf92} (1968), 111--128.

\bibitem{Dix68}
{\sc J. Dixmier},
\emph{Sur les espaces localement quasi-compacts},
Canad. J. Math. {\bf20} (1968), 1093--1100.

\bibitem{FrankPaulsen}
{\sc M. Frank and V. I. Paulsen},
\emph{Injective envelopes of \C*s as operator modules},
Pacific J. Math. \textbf{212} (2003), 57--69.

\bibitem{Ham79}
{\sc M. Hamana},
\emph{Injective envelopes of \C*s},
J. Math. Soc. Japan {\bf31} (1979), 181--197.

\bibitem{HamOkaSaito}
{\sc M. Hamana, T. Okayasu and K. Sait\^o},
\emph{Extensions of derivations and automorphisms from \C*s to their injective envelopes},
T\^ohoku Math. J. \textbf{34} (1982), 277--287.

\bibitem{Kura}
{\sc K. Kuratowski},
Topology, vol.~I, Academic Press, New York, 1966.

\bibitem{Loring}
{\sc T. A. Loring},
Lifting solutions to perturbing problems in \C*s,
Fields Inst. Monographs~{\bf8}, Amer. Math. Soc., Providence, RI, 1997.

\bibitem{Ped78}
{\sc G. K. Pedersen},
\emph{Approximating derivations on ideals of \C*s},
Invent. Math. {\bf45} (1978), 299--305.

\bibitem{Ped79}
{\sc G. K. Pedersen},
\C*s and their automorphism groups,
Academic Press, London, 1979.

\bibitem{Ped89}
{\sc G. K. Pedersen},
Analysis Now,
Graduate Texts in Maths.\ 118, Springer-Verlag, New York, 1989.

\bibitem{RaeWil}
{\sc I. Raeburn and D. P. Williams},
Morita equivalence and continuous-trace \C*s,
Math. Surveys and Monographs {\bf60}, Amer. Math. Soc., Providence, RI, 1998.

\bibitem{Somerset}
{\sc D. W. B. Somerset},
\emph{The local multiplier algebra of a \C*.} II,
J. Funct. Anal. \textbf{171}  (2000), 308--330.


\end{thebibliography}
\end{document}